\newcommand{\cross}[3]{
    \draw[thick] (-0.5+#1, -#2) to (0.5+#1, -#2+1);
    \draw[thick] (-0.5+#1, -#2+1) to (0.5+#1, -#2);
    \foreach \j in {0,...,#3}{
    \ifthenelse{\equal{\j}{\the\numexpr #2-1} \OR \j = #2}{}{
    \draw[thick] (-0.5+#1,-\j ) to (0.5+#1,-\j);}
    }
}
\newcommand{\uncross}[3]{
    \draw[thick] [bend right = 90, looseness=1.25] (-0.5+#1, -#2) to (-0.5+#1, -#2+1);
    \draw[thick] [bend right = 90, looseness=1.25] (0.5+#1, -#2+1) to (0.5+#1, -#2);
    \foreach \j in {0,...,#3}{
    \ifthenelse{\equal{\j}{\the\numexpr #2-1} \OR \j = #2}{}{
    \draw[thick] (-0.5+#1,-\j ) to (0.5+#1,-\j);}
    }
}
\definecolor{goodgreen}{rgb}{0.01, 0.75, 0.24}
\theoremstyle{plain}
\newtheorem{thm}{Theorem}[section]
\newtheorem*{thm*}{Theorem}
\newtheorem{prop}[thm]{Proposition}
\newtheorem{lemma}[thm]{Lemma}
\newtheorem{conjecture}[thm]{Conjecture}
\theoremstyle{definition}
\newtheorem{definition}[thm]{Definition}
\newtheorem{example}[thm]{Example}
\theoremstyle{remark}
\newtheorem{remark}[thm]{Remark}
\newcommand{\TTT}{\mathcal{T}}
\newcommand{\JJJ}{\mathcal{J}}
\newcommand{\MMM}{\mathcal{M}}
\newcommand{\xx}{\mathbf{x}}
\newcommand{\mm}{\mathbf{m}}
\title{\vspace{-1em} 
Correlations in random cluster model at $q=1$}
\author{\vspace{-2em} \\
Son Nguyen, Pavlo Pylyavskyy}
\date{\vspace{-3em}}
\begin{document}
\ytableausetup{centertableaux}

\maketitle

    \begin{abstract}
 Let $\mu$ be a measure that samples a subset of a finite ground set, and let $\mathcal{A}_e$ be the event that element $e$ is sampled. The measure $\mu$ is negatively correlated if for any pair of elements $e, f$ one has $\mu(\mathcal{A}_e \cap \mathcal{A}_f) - \mu(\mathcal{A}_e) \mu(\mathcal{A}_f) \leq 0$. A measure is positively correlated if the direction of the inequality is reversed.  
For the random cluster model on graphs positive correlation between edges is known for $q \geq 1$ due to the FKG inequality, while the negative correlation is only conjectured for $0 \leq q \leq 1$. The main result of this paper is to give a combinatorial formula for the difference in question at $q=1$. Previously, such a formula was known in the uniform spanning tree case, which is a limit of the random cluster model at $q=0$.
\end{abstract}

\tableofcontents

%--------------------------------------------------

\section{Introduction}\label{sec:intro}

An \emph{increasing event} \( \mathcal{A} \subseteq 2^{[n]} \)  
is a collection of subsets of \( [n] \) that are closed upward under containment, that is,
if \( A \in \mathcal{A} \) and \( A \subseteq B \subseteq [n] \), then \( B \in \mathcal{A} \).
Such an event depends only on some subset 
of the ground set,  
specifically on the union of the minimal elements of \( \mathcal{A} \) with respect to inclusion. A measure \( \mu \) on \( 2^{[n]} \) is called \emph{negatively associated} if it satisfies
\[
\mu(\mathcal{A} \cap \mathcal{B}) - \mu(\mathcal{A}) \mu(\mathcal{B}) \leq 0
\]
for any pair of increasing events \( \mathcal{A}, \mathcal{B} \) on \( 2^{[n]} \) that depend on disjoint sets of elements of the ground set. Similarly, $\mu$ is {\it positively associated} if it satisfies
\[
\mu(\mathcal{A} \cap \mathcal{B}) - \mu(\mathcal{A}) \mu(\mathcal{B}) \geq 0
\]
for any pair of increasing events \( \mathcal{A}, \mathcal{B} \) on \( 2^{[n]} \), without restrictions on the ground set.

The simplest example of an increasing event is the event $\mathcal{A}_e$ consisting of all subsets of $[n]$ containing specific element $e$. When a measure satisfies correct inequality 
for any elements $e \not = f$ of the ground set, we say that $\mu$ is {\it negatively correlated} or {\it positively correlated} respectively. 

The study of positive and negative association is a deep and beautiful subject. The case of positive association is easier due to a powerful of {\it {FKG inequality}}, due to Fortuin, Kastelyn, and Ginibre \cite{fortuin1971}. Specifically, if a measure satisfies certain local {\it positive lattice condition}, positive association follows. There are many other related interesting questions and results in the theory of positive association, such as Berg-Kesten inequality \cite{van1985}, Reimer inequality \cite{reimer2000}, etc. We direct the reader to \cite[Chapter 4]{grimmett2018} for a great introduction.

The case of negative association is harder, as no tool like FKG inequality is available. We direct the reader to the survey by Pemantle \cite{pemantle2000} and to the work of Borcea, Br\"and\'en, and Liggett \cite{borcea2009} for rich sources of information on major ideas, theorems, and conjectures in it. Among the notable  developments are the works of Wagner \cite{wagner2008} and Kahn and Neiman \cite{kahn2010}, as well as recent papers of Br\"and\'en and Huh \cite{branden2020} and of Huh, Schr\"oter, and Wang \cite{huh2021}. The first of the latter introduces Lorenzian polynomials as a powerful tool allowing to prove versions of negative association. 

The random cluster model, also known as Fortuin--Kasteleyn model \cite{fortuin1972}, is one of the fundamental models of statistical mechanics, see \cite{grimmett2006} for a definitive source.  
Negative association in the random cluster model was conjectured by Kahn \cite{kahn2000} and Grimmett--Winkler \cite{grimmett2004}. It is related to {\it uniform spanning tree} measure via certain limit at $q=0$. The uniform spanning tree measure is perhaps the most famous example of negative association, related to historical {\it Rayleigh monotonicity} property of electrical networks, see \cite{feder1992, choe2004, wagner2008, cibulka2008}, as well as \cite[Chapter 4]{lyons2017} for a modern exposition. Furthermore, one can also get {\it uniform spanning forest} and {\it uniform connected subgraph} measures by specializing random cluster model, for both of those negative association is conjectured and remains open \cite{harris1974, smirnov2006, lawler2010}. 

In the uniform spanning tree case an actual explicit formula is known \cite{feder1992, cibulka2008}. It can be formulated as follows, see the discussion preceding Theorem \ref{thm:trees} for the relevant notation. 

\begin{thm*}[\cite{feder1992}]
In the case of uniform spanning tree measure, for any graph $G$ and edges $e,f$ we have 
$$\mu(\mathcal{A} \cap \mathcal{B}) - \mu(\mathcal{A}) \mu(\mathcal{B})  \sim \left(\sum_{F \in \mathfrak F} \xx^F - \sum_{F \in \mathfrak F'} \xx^F \right)^2,$$ where $\mathfrak F$ and $\mathfrak F'$ are collections of forests satisfying certain properties with respect to the edges $e$ and $f$.
\end{thm*}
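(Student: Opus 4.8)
The plan is as follows. Since any increasing event on the spanning-tree space that depends on a single edge is of the form $\mathcal A_e=\{T:e\in T\}$, read the statement with $\mathcal A=\mathcal A_e$ and $\mathcal B=\mathcal A_f$ for distinct edges $e\ne f$, and write the weighted uniform spanning tree measure as $\mu(T)=\xx^T/f$, where $f=f(\xx)=\sum_T\xx^T$ ranges over spanning trees of $G$ and $\xx^T=\prod_{g\in T}x_g$. First I would record the elementary identities $\mu(\mathcal A_e)=x_e\,\partial_ef/f$ and $\mu(\mathcal A_e\cap\mathcal A_f)=x_ex_f\,\partial_e\partial_ff/f$, where $\partial_g=\partial/\partial x_g$, which give
\[
\mu(\mathcal A_e\cap\mathcal A_f)-\mu(\mathcal A_e)\mu(\mathcal A_f)\;=\;-\,\frac{x_ex_f}{f^{2}}\,\Delta_{ef}f,\qquad \Delta_{ef}f:=(\partial_ef)(\partial_ff)-f\,(\partial_e\partial_ff).
\]
The prefactor $x_ex_f/f^{2}$ is positive, and the minus sign is precisely the negative-correlation phenomenon; absorbing both (this is the force of ``$\sim$'' in the statement), the whole claim reduces to one assertion: the \emph{Rayleigh difference} $\Delta_{ef}f$ of the spanning-tree polynomial is a perfect square, equal to $N(e,f)^2$ for a signed generating function $N(e,f)=\sum_{F\in\mathfrak F}\xx^F-\sum_{F\in\mathfrak F'}\xx^F$ over two-component spanning forests.

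To prove that, I would realize $f$ as a determinant. Fix a vertex $v_0$, orient every edge once and for all, let $A$ be the vertex--edge incidence matrix of $G$ with the row of $v_0$ deleted (so the column $a_g$ of an edge $g=(u,w)$ is $\mathbf 1_w-\mathbf 1_u$, with $\mathbf 1_{v_0}$ read as $0$), and set $X=\operatorname{diag}(x_g)_g$ and $M=AXA^{\mathsf T}$. Incidence matrices are totally unimodular, so Cauchy--Binet gives $\det M=\sum_T\xx^T=f$ (the Matrix--Tree theorem). Since $M=\sum_g x_g\,a_ga_g^{\mathsf T}$ depends linearly on each $x_g$, Jacobi's formula yields $\partial_ef=f\cdot a_e^{\mathsf T}M^{-1}a_e$, and differentiating again (using $\partial_fM^{-1}=-M^{-1}a_fa_f^{\mathsf T}M^{-1}$) gives
\[
\partial_e\partial_ff=f\Big[(a_e^{\mathsf T}M^{-1}a_e)(a_f^{\mathsf T}M^{-1}a_f)-(a_e^{\mathsf T}M^{-1}a_f)(a_f^{\mathsf T}M^{-1}a_e)\Big].
\]
Substituting into $\Delta_{ef}f$, the diagonal products cancel and $\Delta_{ef}f=f^{2}(a_e^{\mathsf T}M^{-1}a_f)(a_f^{\mathsf T}M^{-1}a_e)$; since $M^{-1}$ is symmetric this equals $f^{2}(a_e^{\mathsf T}M^{-1}a_f)^2=\big(a_e^{\mathsf T}\operatorname{adj}(M)\,a_f\big)^2$ via $f\,M^{-1}=\operatorname{adj}(M)$. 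Thus $\Delta_{ef}f$ is visibly the square of $N(e,f):=a_e^{\mathsf T}\operatorname{adj}(M)\,a_f$, a polynomial in the $x_g$ with integer coefficients. It is precisely the symmetry of $M^{-1}$ that forces a single square here, and that symmetry rests on total unimodularity (regularity of the graphic matroid), matching the fact that the formula is special to graphs.

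The remaining step is the combinatorial reading of $N(e,f)$. Expanding $a_e^{\mathsf T}\operatorname{adj}(M)\,a_f=\sum_{i,j}(a_e)_i(-1)^{i+j}\det\big(M^{(j,i)}\big)(a_f)_j$, where $M^{(j,i)}$ is $M$ with row $j$ and column $i$ removed, and applying Cauchy--Binet to each cofactor — equivalently, invoking the all-minors (Chaiken) Matrix--Tree theorem — writes each cofactor as a signed sum of monomials $\xx^F$ over spanning two-forests $F$ with a prescribed split of vertices between the two trees. Collecting the four terms coming from $a_e=\mathbf 1_w-\mathbf 1_u$ and $a_f=\mathbf 1_{w'}-\mathbf 1_{u'}$, and performing the inclusion--exclusion that cancels the contribution of the auxiliary root $v_0$, one obtains $N(e,f)=\sum_{F\in\mathfrak F}\xx^F-\sum_{F\in\mathfrak F'}\xx^F$, where $\mathfrak F$ consists of the spanning two-forests in which $e$ and $f$ are ``parallel'' (the tail of $e$ in the same tree as the tail of $f$, likewise the heads) and $\mathfrak F'$ of those in which they are ``crossed'' (tail of $e$ with head of $f$, and vice versa); two-forests in which $e$ or $f$ already joins its two endpoints contribute nothing. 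These are the ``forests satisfying certain properties with respect to $e$ and $f$'' of the statement, and this closes the argument.

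I expect this last step — the honest sign bookkeeping in $N(e,f)$ and the check that $N(e,f)$ does not depend on the auxiliary root $v_0$ — to be the only real obstacle, the first two steps being essentially forced. Two variants ease it. Working with the Moore--Penrose pseudoinverse $L^{+}$ of the full weighted Laplacian in place of $M^{-1}$ makes root-independence automatic and lets one read $N(e,f)$ off the harmonic-function / two-forest description of $\chi_e^{\mathsf T}L^{+}\chi_f$ (the potential drop across $f$ of the unit current driven through $e$); this is the Burton--Pemantle transfer-current picture, and the identity $\mu(\mathcal A_e\cap\mathcal A_f)-\mu(\mathcal A_e)\mu(\mathcal A_f)=-Y(e,f)Y(f,e)$ for the transfer current $Y$ recovers the square directly. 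Alternatively, for a purely enumerative proof, one can attempt a weight-preserving, sign-reversing involution realizing $Z^{10}Z^{01}-Z^{11}Z^{00}=x_ex_f\,N(e,f)^2$ — here $Z^{\varepsilon\delta}$ is the weighted number of spanning trees containing $e$ iff $\varepsilon=1$ and $f$ iff $\delta=1$, and $Z^{10}Z^{01}-Z^{11}Z^{00}=x_ex_f\,\Delta_{ef}f$ — by a cycle-switching argument on the edge overlay $T_1\cup T_2$ of a pair of spanning trees; the delicate point there is to produce exactly a square, which is enforced by the $e\leftrightarrow f$ symmetry of the matching.
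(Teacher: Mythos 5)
The paper does not actually prove this statement: it is the classical weighted--spanning--tree correlation formula, quoted from \cite{feder1992} (and restated with precise normalization as Theorem \ref{thm:trees}), so there is no internal proof to compare yours against. Judged on its own, your argument is correct and is essentially the standard linear-algebraic proof from the cited literature. The reductions $\mu(\mathcal{A}_e)=x_e\partial_e f/f$ and $\mu(\mathcal{A}_e\cap\mathcal{A}_f)=x_ex_f\partial_e\partial_f f/f$ are valid because the spanning-tree generating polynomial $f$ is multiaffine, and they correctly turn the claim into the assertion that the Rayleigh difference $(\partial_e f)(\partial_f f)-f\,\partial_e\partial_f f$ is a polynomial square. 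Your matrix--tree computation is sound: with $M=AXA^{\mathsf T}$ and Jacobi's formula, the diagonal terms cancel and the symmetry of $M$ collapses the remaining product into $\bigl(a_e^{\mathsf T}\operatorname{adj}(M)\,a_f\bigr)^2$, and the all-minors matrix--tree theorem (equivalently the Burton--Pemantle transfer-current description) identifies $a_e^{\mathsf T}\operatorname{adj}(M)\,a_f$ as a signed sum of $\xx^F$ over two-component spanning forests that both $e$ and $f$ reconnect, the sign recording whether $e$ and $f$ join the two components in parallel or crosswise --- exactly the collections $\mathfrak F$, $\mathfrak F'$ of the statement. The two points you leave at sketch level, the cofactor sign bookkeeping and the independence of $N(e,f)$ from the deleted root $v_0$ (up to a global sign, which is immaterial since only $N(e,f)^2$ enters), are routine consequences of the all-minors theorem, so I see no substantive gap. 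Two small remarks: the ``$\sim$'' hides a sign, and what you prove is $\mu(\mathcal{A}_e\cap\mathcal{A}_f)-\mu(\mathcal{A}_e)\mu(\mathcal{A}_f)=-x_ex_f N(e,f)^2/f^2$, which is consistent with the paper because its Theorem \ref{thm:trees} is phrased for $\TTT_e^f\TTT_f^e-\TTT_{ef}\TTT^{ef}$, the negative of this covariance; and you prove the UST statement directly rather than as the $q\to 0$ limit of the random cluster quantities divided by $q^2x_ex_f$, which is a difference of framing only and is faithful to the informal statement under review.
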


The main result of this paper is to give an explicit formula for the case $q=1$ as follows, see Section \ref{sec:formula} for relevant definitions and notation.

    \begin{thm*} [Theorem \ref{thm:main-thm}]
        For random cluster model at $q=1$ for any graph $G$ and edges $e,f$ we have
        \begin{equation}
           \mu(\mathcal{A} \cap \mathcal{B}) - \mu(\mathcal{A}) \mu(\mathcal{B})  \sim \sum_{\beta,\gamma}\left(\xx^{\beta}\xx^{\gamma}\sum_{\mm\in B_{\beta,\gamma}} \mm\right). 
        \end{equation}
    \end{thm*}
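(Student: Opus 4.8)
The plan is to compute $\mu(\mathcal{A}_e \cap \mathcal{A}_f) - \mu(\mathcal{A}_e)\mu(\mathcal{A}_f)$ directly from the definition of the random cluster measure at $q=1$. Recall that at $q=1$ the partition function factors completely: with edge weights $x_e$, one has $Z = \prod_{g}(1+x_g)$, and the measure of a configuration $\omega \subseteq E(G)$ is $\mu(\omega) = Z^{-1}\prod_{g\in\omega} x_g$. Consequently $\mu(\mathcal{A}_e) = x_e/(1+x_e)$ and the events $\mathcal{A}_e$, $\mathcal{A}_f$ are independent, so the difference vanishes identically. This means the formula in Theorem~\ref{thm:main-thm} cannot be about the bare edge-events $\mathcal{A}_e,\mathcal{A}_f$; rather, $\mathcal{A}$ and $\mathcal{B}$ must be the increasing events defined in Section~\ref{sec:formula} — presumably \emph{connectivity} events (e.g.\ ``$e$ lies in the cluster of a marked vertex'' or ``the endpoints of $e$ are connected to those of $f$''), which at $q=1$ are genuinely correlated even though single-edge events are not. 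So the first real step is to write $\mu(\mathcal{A}\cap\mathcal{B})$, $\mu(\mathcal{A})$, $\mu(\mathcal{B})$ as ratios of generating polynomials $\sum_{\omega\in\mathcal{A}\cap\mathcal{B}}\xx^\omega$, etc., each divided by $Z$, and to clear denominators so that the numerator of the difference becomes a single polynomial in the $x_g$'s with a common factor reflecting the $\sim$ in the statement.

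Second, I would expand $Z \cdot \big(\sum_{\mathcal{A}\cap\mathcal{B}}\xx^\omega\big) - \big(\sum_{\mathcal{A}}\xx^\omega\big)\big(\sum_{\mathcal{B}}\xx^\omega\big)$ combinatorially. Each term is a product $\xx^{\omega_1}\xx^{\omega_2}$ indexed by an ordered pair of subgraphs, so the difference is an alternating sum over such pairs, and the goal is to produce a sign-reversing involution that cancels the ``bad'' pairs and leaves exactly the sum appearing on the right-hand side. The indices $\beta,\gamma$ in the statement should record the ``pieces'' of the two subgraphs that interact with the edges $e$ and $f$ (their traces near $e$ and $f$, or the pair of connectivity patterns they realize), and $B_{\beta,\gamma}$ should be the collection of monomials $\mm = \xx^{\delta}$ coming from the complementary parts that can be glued onto $\beta,\gamma$ in a way compatible with lying in $\mathcal{A}\cap\mathcal{B}$ but incompatible with the product event — this is precisely the analogue, at $q=1$, of the forest collections $\mathfrak{F},\mathfrak{F}'$ in the Feder--Mihail spanning-tree formula, except that at $q=1$ one gets a manifestly nonnegative linear expression $\sum \mm$ rather than a perfect square.

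Third, to make the involution work I expect to need a ``switching'' or ``cluster-swapping'' argument in the spirit of the classical FKG/Holley coupling and of the spanning-tree transfer-current computations: given a pair $(\omega_1,\omega_2)$ with $\omega_1\in\mathcal{A}$, $\omega_2\in\mathcal{B}$, one looks at the symmetric difference restricted to the relevant clusters of $e$ and $f$ and toggles a canonically chosen edge or path to swap which of the two configurations ``uses'' a given connection, thereby pairing $(\omega_1,\omega_2)\mapsto(\omega_1',\omega_2')$ with opposite contribution unless the configuration is already ``resolved.'' The surviving (unpaired) configurations are organized by their near-$e$/near-$f$ data $(\beta,\gamma)$ and contribute the terms $\xx^\beta\xx^\gamma\sum_{\mm\in B_{\beta,\gamma}}\mm$.

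The main obstacle, I anticipate, is \emph{defining the involution canonically and checking it is well-defined and sign-reversing} — i.e.\ proving that the toggling operation is an involution, that it changes the parity/contribution correctly, and that its fixed points are exactly the pairs enumerated by $\bigsqcup_{\beta,\gamma} B_{\beta,\gamma}$. Connectivity events are global, so a naive local swap may not preserve membership in $\mathcal{A}$ or $\mathcal{B}$; getting a swap that is simultaneously local enough to be an involution and global enough to respect connectivity is the delicate point. A secondary technical issue is identifying the common factor hidden in the ``$\sim$'': one must verify that every monomial in $Z\cdot(\cdots) - (\cdots)(\cdots)$ is divisible by the expected product of $(1+x_g)$ factors (coming from edges irrelevant to both $e$ and $f$), so that after dividing out we land exactly on $\sum_{\beta,\gamma}\xx^\beta\xx^\gamma\sum_{\mm\in B_{\beta,\gamma}}\mm$; I would handle this by grouping the enumeration over the status of each ``irrelevant'' edge before running the involution on the remaining coordinates.
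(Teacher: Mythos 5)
There is a genuine gap, and it is at the very first step: you have misidentified the statement to be proved. You correctly observe that at $q=1$ the measure is a product measure, so $\mu(\JJJ_e\cap\JJJ_f)-\mu(\JJJ_e)\mu(\JJJ_f)$ vanishes identically; but you then conclude that $\mathcal{A},\mathcal{B}$ must be some other (connectivity) events, which is not what the theorem asserts. The events really are the bare edge events, and the ``$\sim$'' hides exactly the normalization that makes the statement nontrivial: the difference $\TTT_e^f\TTT_f^e-\TTT_{ef}\TTT^{ef}$ is divisible by $x_ex_f(1-q)$ (Remark \ref{remark:AB}), and the theorem computes the quotient $\MMM_{ef}(q)=(\TTT_e^f\TTT_f^e-\TTT_{ef}\TTT^{ef})/x_ex_f(1-q)$ evaluated \emph{at} $q=1$ --- i.e.\ the first-order term in $(1-q)$ of the correlation, not the correlation itself. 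The vanishing you noticed is precisely the reason this normalized quantity is the object of study, not a signal that the events are different. Consequently your entire plan --- clearing denominators for connectivity events and building a cluster-swapping involution that preserves membership in those events --- is aimed at a different problem, and the objects in the right-hand side ($\beta$, $\gamma$, paracels, smoots, twins, $B_{\beta,\gamma}$), which are defined in Section \ref{sec:formula} purely in terms of how $e$ and $f$ join components of subgraphs of $E\setminus\{e,f\}$, never enter your argument.

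Even setting aside the misreading, the proposal is a plan rather than a proof: the switching involution is not defined, its fixed points are not characterized, and you yourself flag these as the main obstacles. For comparison, the paper's proof works with the $q$-expansion of $\MMM_{ef}$ directly: each pair $(A,B)$ of subsets of $E^{ef}$ contributes $(q^{k_1(A,B)}-q^{k_2(A,B)})\xx^{A+B}/(1-q)$, so at $q=1$ the contribution is $+1$, $0$, or $-1$ according to $k_1-k_2\in\{-1,0,1\}$. A deletion--contraction induction (delete $g$ when $\lambda_g=0$, contract when $\lambda_g=2$) reduces everything to the squarefree monomial $\xx^{E^{ef}}$; there the swap $(A,E^{ef}-A)\leftrightarrow(E^{ef}-A,A)$ cancels all signed pairs except those in which one side is a paracel, so the left-hand coefficient counts paracels, and a separate argument shows each paracel $\gamma$ admits exactly one compatible $\beta$ (its smoots) and one splitting of the remaining edges into disjoint twins $\alpha,\alpha'$, so the right-hand coefficient counts paracels as well. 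Your proposal contains neither the normalization by $(1-q)$, nor the reduction to squarefree monomials, nor any mechanism matching the paracel/smoot/twin combinatorics, so it does not establish the theorem.
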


It is our hope that finding a common generalization of the two formulas can lead to resolving Kahn--Grimmett--Winkler conjecture in full generality. To this end we formulate $\alpha \beta \gamma$-ansatz conjecture, resolving which would generalize both of the above formulas, and would imply Kahn--Grimmett–-Winkler conjecture.

The paper is organized as follows. In Section \ref{sec:prelim} we explain the necessary background on random cluster model and correlations in it. In Section \ref{sec:formula} we state our formula for $q=1$ and in Section \ref{sec:proof} we prove it. In Section \ref{sec:conj} we state $\alpha \beta \gamma$-ansatz conjecture.

\section{Preliminaries} \label{sec:prelim}

\subsection{Background}

    Let $G = (V,E)$ be a graph and $\mathbf{x} = \{x_e:e\in E\}$ be a set of positive real numbers indexed by $E$. Here we allow $G$ to have parallel edges and loops. Let the state space be $\Omega = \{0,1\}^E$ consisting of vectors $\omega = (\omega(e):e\in E)$. Let $\eta(\omega) = \{e\in E:\omega(e) = 1\}$. This yields a correspondence between $\Omega$ and subsets $F\subseteq E$ given by $F = \eta(\omega)$. We will sometimes abuse the notation and denote $\eta(F)$ for the subgraph of $G$ induced by $F$. Let $F = \eta(\omega)$, we denote $\xx^{\omega}:=\xx^F := \prod_{e\in F}x_e$, and let $k(\omega):=k(F)$ be the number of connected components in $\eta(F)$.

    Given two subsets $A,B\subseteq E$, consider event
    \[ \JJJ_A^B = \{\omega\in\Omega:\omega(e) = 1~\text{for all $e\in A$}, \omega(e) = 0~\text{for all $e\in B$}\} \]
    and define the \textit{random cluster model} measure
    \[ \mu(\JJJ_A^B) = \TTT_A^B = \frac{1}{Z} \sum_{\omega\in\JJJ_A^B}\xx^{\omega}q^{k(\omega)}, \]
    where $Z = \sum_{\omega}\xx^{\omega}q^{k(\omega)}$ is the partition function. 
    \begin{remark}
Note that the measure for the random cluster model is usually written as 
$$\phi_{p,q}(\omega) = \frac{1}{Z} q^{k(\omega)} \prod_{e \in E} p^{\omega(e)} (1-p)^{1 - \omega(e)}.$$
One obtains this version from our version by setting $x_e = p/(1-p)$ for each edge $e$, and canceling the common factor $\prod_{e \in E} (1-p)$. 
    \end{remark}

    Fixing two edges $e,f\in E$, we abuse the notation and write $ef,e,f$ for $\{e,f\},\{e\},\{f\}$. For instance, we write $\JJJ_{ef},\JJJ^{ef},\JJJ_e^f,\JJJ_f^e$ for $\JJJ_{\{e,f\}}$, $\JJJ^{\{e,f\}}$, $\JJJ_{\{e\}}^{\{f\}}$, $\JJJ_{\{f\}}^{\{e\}}$, respectively. We say a probability measure $\mu$ is \textit{edge-negatively-associated} or {\it negatively correlated} if
    \[ \mu(\JJJ_e\cap\JJJ_f) \leq \mu(\JJJ_e)\mu(\JJJ_f),\quad\quad\text{for}~e,f\in E, e\neq f. \]

    \begin{conjecture}[\cite{kahn2000, grimmett2004}]\label{conj:old}
        The random cluster model is negatively correlated for $0 \leq q \leq 1$.
    \end{conjecture}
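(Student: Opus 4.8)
The conjecture is open for $0<q<1$, so what follows is a plan of attack rather than a proof; it is essentially the plan that organizes the rest of the paper. The first step is to recast the probabilistic inequality as a polynomial inequality about partition functions. Write $\hat{\TTT}_A^B=\sum_{\omega\in\JJJ_A^B}\xx^\omega q^{k(\omega)}$ for the unnormalized weight, so that $\TTT_A^B=\hat{\TTT}_A^B/Z$. Since $\JJJ_e\cap\JJJ_f=\JJJ_{ef}$, negative correlation of $e$ and $f$ is the inequality $\hat{\TTT}_e\hat{\TTT}_f-\hat{\TTT}_{ef}Z\ge 0$. Conditioning on the joint state of $e$ and $f$ gives $Z=\hat{\TTT}^{ef}+\hat{\TTT}_e^f+\hat{\TTT}_f^e+\hat{\TTT}_{ef}$, together with $\hat{\TTT}_e=\hat{\TTT}_e^f+\hat{\TTT}_{ef}$ and $\hat{\TTT}_f=\hat{\TTT}_f^e+\hat{\TTT}_{ef}$; substituting these, the terms involving $\hat{\TTT}_{ef}$ cancel and the conjecture becomes
\begin{equation}\label{eq:contract-two-edges}
\hat{\TTT}_e^f\cdot\hat{\TTT}_f^e\ \ge\ \hat{\TTT}_{ef}\cdot\hat{\TTT}^{ef}.
\end{equation}
Each of the four factors in \eqref{eq:contract-two-edges} is, up to an explicit monomial in $x_e$ and $x_f$, the random cluster partition function of a graph obtained from $G$ by contracting and/or deleting $e$ and $f$, so \eqref{eq:contract-two-edges} is a purely structural statement about how $Z$ interacts with contraction and deletion. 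At $q=1$ every partition function factors over edges and the two sides of \eqref{eq:contract-two-edges} coincide; Theorem~\ref{thm:main-thm} records the combinatorial quantity controlling how the inequality behaves near $q=1$, which is why the case $q=1$ is the natural first target.

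The second step is to expand the difference of the two sides of \eqref{eq:contract-two-edges} as a signed sum over quadruples of subgraphs and to read off its sign directly. Each $\hat{\TTT}$ is a sum of monomials $\xx^F q^{k(F)}$, so the difference is a signed subgraph sum; one seeks a weight-preserving, sign-reversing involution pairing the negative monomials with positive ones, so that the remaining fixed-point terms are manifestly nonnegative. This is the mechanism behind the two endpoint results assumed above: in the uniform spanning tree limit $q=0$, the formula quoted before Theorem~\ref{thm:trees} shows the surviving terms assemble into a perfect square, and at $q=1$ the analysis of Theorem~\ref{thm:main-thm} shows they assemble into the manifestly nonnegative expression $\sum_{\beta,\gamma}\xx^\beta\xx^\gamma\sum_{\mm\in B_{\beta,\gamma}}\mm$.

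The third step --- and the main obstacle --- is to interpolate between these two special cases to all $q\in[0,1]$. Both known formulas have the same silhouette: a sum, over a pair of ``boundary data'' (elements of $\mathfrak{F}$ versus $\mathfrak{F}'$ at $q=0$; the indices $\beta,\gamma$ at $q=1$), of a nonnegative local contribution; the $\alpha\beta\gamma$-ansatz of Section~\ref{sec:conj} is the precise conjectural form of the general-$q$ interpolant. The difficulty is that the factor $q^{k(\omega)}$ behaves nonlocally under contraction and deletion, so the involution and the description of its fixed-point set must be built uniformly in $q$; this bookkeeping of connected components is where essentially all of the work lies. Granting the $\alpha\beta\gamma$-ansatz, each term of the resulting formula for the difference in \eqref{eq:contract-two-edges} is a monomial in $\xx$ whose coefficient is a polynomial in $q$ with nonnegative coefficients, hence nonnegative on $[0,1]$; nonnegativity of \eqref{eq:contract-two-edges}, and with it Conjecture~\ref{conj:old}, follows at once.
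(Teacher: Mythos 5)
You are addressing an open conjecture: the paper does not prove it, and neither do you --- your text is, as you yourself say, a plan rather than a proof. As a plan it coincides with the paper's own program: your reduction of negative correlation to $\hat{\TTT}_e^f\hat{\TTT}_f^e \ge \hat{\TTT}_{ef}\hat{\TTT}^{ef}$ is exactly the paper's ``with a little algebra'' reformulation (which the paper then sharpens by dividing by $x_ex_f(1-q)$ to define $\MMM_{ef}(q)$, so that $q=1$ becomes a nontrivial positivity statement rather than the identity you observe); the two endpoints you invoke are Theorem \ref{thm:trees} and Theorem \ref{thm:main-thm}; and your proposed interpolation is precisely Conjecture \ref{conj:general-q}. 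So the only substantive step to audit is your final inference, that the $\alpha\beta\gamma$-ansatz implies Conjecture \ref{conj:old}.

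That implication is true, but the reason you give for it is wrong. You claim that, granting the ansatz, each monomial in $\xx$ occurs in the difference with a coefficient that is a polynomial in $q$ with nonnegative coefficients. Conjecture \ref{conj:general-q} asserts no such subtraction-freeness; it asserts only that each $Q_{\beta,\gamma}$ is a positive semidefinite quadratic form for $0\le q\le 1$. The paper's $K_4$ example shows the two are genuinely different: there $Q_{\varnothing,\varnothing}=(x_gx_h)^2+(x_kx_\ell)^2-(2-3q+q^2)x_gx_hx_kx_\ell$, so the coefficient of $x_gx_hx_kx_\ell$ is negative for $0\le q<1$. Indeed the paper emphasizes that subtraction-freeness of $\MMM_{ef}$ is a special feature of $q=1$ and already fails for the lowest-degree terms at $q=0$, where the square in Theorem \ref{thm:trees} is not subtraction-free. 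The correct deduction from the ansatz is simply: for positive edge weights each $\xx^\beta\xx^\gamma>0$, and each $Q_{\beta,\gamma}$, being positive semidefinite on $0\le q\le1$, is nonnegative when evaluated at the positive numbers $\xx^\alpha$, $\alpha\in A_{\beta,\gamma}$; hence $\MMM_{ef}(q)\ge 0$, and multiplying back by $x_ex_f(1-q)q^2\ge 0$ gives $\TTT_e^f\TTT_f^e-\TTT_{ef}\TTT^{ef}\ge 0$, i.e.\ Conjecture \ref{conj:old}, for $0\le q\le 1$. With that repair your outline matches the paper's; as written, your last step rests on a false statement about the shape of the conjectured formula.
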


    With a little algebra, one can see that the condition for edge-negatively-association of random cluster model is equivalent to
    \[ \TTT_e^f\TTT_f^e - \TTT_{ef}\TTT^{ef} \geq 0. \]

    \begin{example} \label{ex:K3}
        Let $G = K_3$ with three edges $e,f,g$ as shown in Figure \ref{fig:K3}. Then
        \begin{align*}
            \TTT_e^f &= x_eq^2 + x_ex_gq\\
            \TTT_f^e &= x_fq^2 + x_fx_gq\\
            \TTT_{ef} &= x_ex_fq + x_ex_fx_gq\\
            \TTT^{ef} &= q^3 + x_gq^2
        \end{align*}
        Hence,
        \[ \TTT_e^f\TTT_f^e - \TTT_{ef}\TTT^{ef} = (x_eq^2 + x_ex_gq)(x_fq^2 + x_fx_gq) - (x_ex_fq + x_ex_fx_gq)(q^3 + x_gq^2) \]
        \[ = x_ex_fx_g(q^3-q^4) + x_ex_fx_g^2(q^2 - q^3), \]
        which is nonnegative for $0 \leq q \leq 1$.

        \begin{figure}[h!]
            \centering
            \includegraphics[scale = 0.8]{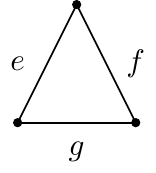}
            \caption{$K_3$}
            \label{fig:K3}
        \end{figure}
    \end{example}

\begin{remark} \label{remark:AB}
    Let $E^{ef}$ denote $E\backslash\{e,f\}$. One can easily see that there is a one-to-one correspondence between monomials in $\TTT_e^f\TTT_f^e$ and pairs of subset $(A,B)$ of $E^{ef}$ given by
    \[ (A,B)\mapsto q^{k_1(A,B)}\xx^{A+e}\xx^{B+f} \quad \text{where} \quad k_1(A,B) = k(A+e) + k(B+f). \] 
    We also have a similar correspondence with monomials in $\TTT_{ef}\TTT^{ef}$ given by
    \[ (A,B)\mapsto q^{k_2(A,B)}\xx^{A+e+f}\xx^{B} \quad \text{where} \quad k_2(A,B) = k(A+e+f) + k(B). \]
    Thus, when setting $q = 1$, every monomial appears in $\TTT_e^f\TTT_f^e$ and $\TTT_{ef}\TTT^{ef}$ with the same coefficient, and the difference is $0$. 
    \end{remark}
    
    It is easily seen in the Example \ref{ex:K3} above for example, as after setting $q=1$ the expression $x_ex_fx_g(q^3-q^4) + x_ex_fx_g^2(q^2 - q^3)$ turns into $0$. 

    By Factor theorem this means that $\TTT_e^f\TTT_f^e - \TTT_{ef}\TTT^{ef}$ is divisible by $1-q$. In addition, every monomial in this expression has a factor of $x_ex_f$. Thus, the difference $\TTT_e^f\TTT_f^e - \TTT_{ef}\TTT^{ef}$ is divisible by $x_ex_f(1-q)$, and thus for $0 < q < 1$ it is natural to reformulate the conjecture as positivity of the expression  
    \[ \MMM_{ef}(q) := (\TTT_e^f\TTT_f^e - \TTT_{ef}\TTT^{ef})/x_ex_f(1-q) \geq 0. \]
    It is useful to observe that each pair $(A,B)$ of subsets in $E^{ef}$ contributes $(q^{k_1(A,B)} - q^{k_2(A,B)})\xx^{A+B}/(1-q)$ to $\MMM_{ef}(q)$.

    \begin{example}\label{ex:k3q}
        For $G = K_3$, we have
        \[ \MMM_{ef}(q) := (\TTT_e^f\TTT_f^e - \TTT_{ef}\TTT^{ef})/(1-q) = x_gq^3 + x_g^2q^2. \]
    \end{example}

    \begin{conjecture} \label{conj:main}
        For $0 \leq q \leq 1$ and for positive edgeweights $x_e$ one has $\MMM_{ef}(q) \geq 0$. 
    \end{conjecture}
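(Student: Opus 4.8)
The plan is to reduce Conjecture \ref{conj:main} to a single purely combinatorial injectivity statement, obtained by refining the cancellation underlying Theorem \ref{thm:main-thm} so that it also keeps track of the number of connected components. Recall that each pair of subsets $(A,B)$ of $E^{ef}$ contributes $\dfrac{q^{k_1(A,B)}-q^{k_2(A,B)}}{1-q}\,\xx^{A+B}$ to $\MMM_{ef}(q)$, where $k_1(A,B)=k(A+e)+k(B+f)$ and $k_2(A,B)=k(A+e+f)+k(B)$. Writing $u,v$ for the endpoints of $f$, a direct computation gives
\[
k_1(A,B)-k_2(A,B)=\mathbf 1[\,u\not\leftrightarrow v\text{ in }A\cup\{e\}\,]-\mathbf 1[\,u\not\leftrightarrow v\text{ in }B\,]\ \in\ \{-1,0,1\}.
\]
Thus a pair contributes $0$ when this difference is $0$; it contributes $+\,q^{k_1(A,B)}\xx^{A+B}$ when the difference equals $-1$ (call such $(A,B)$ \emph{good}); and it contributes $-\,q^{k_2(A,B)}\xx^{A+B}$ when the difference equals $+1$ (call it \emph{bad}). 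Grouping by the edge monomial, $\MMM_{ef}(q)=\sum_M(P_M(q)-N_M(q))\,\xx^M$, where $M$ ranges over multisets on $E^{ef}$, $P_M(q)=\sum q^{k_1(A,B)}$ over good pairs with $A+B=M$, and $N_M(q)=\sum q^{k_2(A,B)}$ over bad pairs with $A+B=M$. Since the $x_e$ are positive, it suffices to prove $P_M(q)\ge N_M(q)$ for every $M$ and every $q\in[0,1]$; this fibre-wise inequality holds at $q=1$ by Theorem \ref{thm:main-thm}.

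The key step is to construct, for each $M$, an injection $\Phi_M$ from the bad pairs with $A+B=M$ into the good pairs with $A+B=M$ that never increases the relevant component count, i.e. $k_1(\Phi_M(A,B))\le k_2(A,B)$. Granting this, since $q^a\ge q^b$ whenever $a\le b$ and $0\le q\le 1$, we obtain $P_M(q)\ge\sum_{(A,B)\text{ bad}}q^{k_1(\Phi_M(A,B))}\ge\sum_{(A,B)\text{ bad}}q^{k_2(A,B)}=N_M(q)$, and summing over $M$ proves the conjecture. A pair $(A,B)$ with $A+B=M$ is nothing but a $2$-colouring of the edges of multiplicity one in $M$ (edges of multiplicity two lie in both $A$ and $B$), so $\Phi_M$ is a rule for re-routing a controlled set of edges between the two sides. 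For a bad pair one has $u\not\leftrightarrow v$ in $A\cup\{e\}$ and $u\leftrightarrow v$ in $B$, whence $k_2(A,B)=k(A\cup\{e\})-1+k(B)$; for a good pair $u\leftrightarrow v$ in $A\cup\{e\}$ and $u\not\leftrightarrow v$ in $B$, whence $k_1(A',B')=k(A'\cup\{e\})+k(B')-1$; so the monotonicity requirement collapses to $k(A'\cup\{e\})+k(B')\le k(A\cup\{e\})+k(B)$ with $A'+B'=A+B$. The natural construction is a path/cut swap: from a bad pair, take a $u$--$v$ path in $B$ and a minimal $u$--$v$ cut in $A\cup\{e\}$, and move a suitable alternating sub-collection of edges from $B$ to $A$ together with compensating edges from $A$ to $B$, so that $u$ and $v$ become connected in the new $A'\cup\{e\}$ and disconnected in $B'$ while components are only merged, never split. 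Making such a rule canonical and injective, with the component inequality built in, is a refinement of what is done for Theorem \ref{thm:main-thm} at $q=1$.

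The main obstacle is precisely this component-monotone injection. At $q=1$ one needs only a matching of the surviving good pairs with a distinguished family of monomials, with no control on $k$; here the matching must additionally preserve the order of the $q$-exponents, and a fixed edge monomial may carry bad pairs of small $k_2$ that can only be sent to good pairs of equally small $k_1$. Organizing both families by the finer data $(\beta,\gamma)$ of Section \ref{sec:formula} and proving $P_M(q)\ge N_M(q)$ fibre-by-fibre, monomial-by-monomial in $q$, is exactly the content of the $\alpha\beta\gamma$-ansatz conjecture of Section \ref{sec:conj}; resolving it would display the $q=0$ perfect square of \cite{feder1992} and the $q=1$ formula of Theorem \ref{thm:main-thm} as two specializations of a single identity, and this is the step I expect to be genuinely hard. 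In parallel one can attempt induction on $|E|$ via deletion--contraction of an edge $g\ne e,f$ — delicate because of the quadratic form $\TTT\TTT$, but perhaps tractable after the grouping above — and one can try to force the shape of $\Phi_M$ from series--parallel reductions and small cases such as Example \ref{ex:k3q}.
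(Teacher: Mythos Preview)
The statement you are trying to prove is Conjecture~\ref{conj:main}, which the paper does \emph{not} prove; for $0<q<1$ it is equivalent to the open Kahn--Grimmett--Winkler conjecture (Conjecture~\ref{conj:old}). So there is no ``paper's own proof'' to compare against, and any complete argument here would resolve a long-standing open problem.

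Your plan is not merely incomplete; the sufficient condition you set out to establish is false. You propose to prove, for every multiset $M$ on $E^{ef}$, the fibre-wise inequality $P_M(q)\ge N_M(q)$, i.e.\ that the coefficient of $\xx^M$ in $\MMM_{ef}(q)$ is nonnegative for all $q\in[0,1]$. The $K_4$ example in Section~\ref{sec:conj} already rules this out. Summing the entries of that table, the coefficient of the square-free monomial $x_gx_hx_kx_\ell$ in $\MMM_{ef}(q)/q^2$ is
\[
-(2-3q+q^2)\;+\;q^2\;+\;q^2\;=\;q^2+3q-2,
\]
the three contributions coming from $(\beta,\gamma)=(\varnothing,\varnothing)$, $(\{g,h\},\{k,\ell\})$, and $(\{k,\ell\},\{g,h\})$ respectively. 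Hence the coefficient of $x_gx_hx_kx_\ell$ in $\MMM_{ef}(q)$ equals $q^2(q^2+3q-2)$, which is strictly negative for $0<q<\tfrac{-3+\sqrt{17}}{2}\approx 0.56$. Consequently no injection $\Phi_M$ from bad pairs to good pairs with $k_1(\Phi_M(A,B))\le k_2(A,B)$ can exist for this $M$: there are strictly more bad $q$-weighted pairs than good ones at small $q$. The $q=0$ story is consistent with this, since the UST square $(x_gx_h-x_kx_\ell)^2$ already carries a negative cross-term.

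You also identify your fibre-wise program with the $\alpha\beta\gamma$-ansatz of Section~\ref{sec:conj}, but the two are genuinely different. Conjecture~\ref{conj:general-q} asks for a decomposition of $\MMM_{ef}(q)/q^2$ as $\sum_{\beta,\gamma}\xx^\beta\xx^\gamma Q_{\beta,\gamma}$ with each $Q_{\beta,\gamma}$ positive semidefinite in the variables $\xx^\alpha$; such forms are allowed (and, in $K_4$, required) to have negative off-diagonal coefficients. Coefficient-wise nonnegativity in the $\xx$-monomials is a strictly stronger demand and, as shown above, fails. If you want to pursue an injective/combinatorial approach, it cannot operate one $\xx$-monomial at a time; any viable scheme must mix monomials in the way a PSD quadratic form does.
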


    Note that Conjecture \ref{conj:old} is trivial for $q=1$ while Conjecture \ref{conj:main} is not. In fact, the validity of Conjecture \ref{conj:main} follows, by taking $q \rightarrow 1$ limit, what is known about positive correlations for $q > 1$. 

\subsection{Comparison with Rayleigh monotonicity}

It is possible to take the limit in the random cluster model to obtain Uniform Spanning Tree (UST) model. Specifically, one needs to take the limit so that $p,q \rightarrow 0$ and $q/p \rightarrow 0$, see \cite[Theorem 1.23]{grimmett2006}. 

Negative correlation for the UST model is a classical result equivalent to Rayleigh monotonicity in electrical networks, we refer the reader to \cite{feder1992, choe2004, cibulka2008, wagner2008, borcea2009, lyons2017} for the proofs and the background. In our language, this result can be formulated as follows, where the limit and the specialization pick out the terms with the lowest power of $q$. 

\begin{thm}[{\cite[Theorem 2.1]{feder1992}}] \label{thm:trees}
We have $$(\TTT_e^f\TTT_f^e - \TTT_{ef}\TTT^{ef})/q^2 \mid_{q=0} \geq 0 \Leftrightarrow \frac{\MMM_{ef}}{q^2} \mid_{q=0} \geq 0.$$ In fact, $$\frac{\TTT_e^f\TTT_f^e - \TTT_{ef}\TTT^{ef}}{q^2 x_e x_f} \mid_{q=0}  = \left(\sum_{F \in \mathfrak F} \xx^F - \sum_{F \in \mathfrak F'} \xx^F \right)^2,$$ where $\mathfrak F$ and $\mathfrak F'$ are collections of forests satisfying certain properties with respect to the edges $e$ and $f$.
\end{thm}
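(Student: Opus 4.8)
The equivalence is immediate: $\MMM_{ef}(q)=(\TTT_e^f\TTT_f^e-\TTT_{ef}\TTT^{ef})/(x_ex_f(1-q))$, so at $q=0$ the quantities $(\TTT_e^f\TTT_f^e-\TTT_{ef}\TTT^{ef})/q^2$ and $\MMM_{ef}/q^2$ differ by the strictly positive factor $x_ex_f$ and hence have the same sign. The content is therefore the closed formula for $(\TTT_e^f\TTT_f^e-\TTT_{ef}\TTT^{ef})/(q^2x_ex_f)\mid_{q=0}$, which I would establish in three stages: extract the relevant coefficient of $q$; recast it as a polynomial identity about spanning trees and spanning $2$-forests; and prove that identity.

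\emph{Extracting the coefficient.} Replace each $\TTT_A^B$ by $Z\,\TTT_A^B=\sum_{\omega\in\JJJ_A^B}\xx^\omega q^{k(\omega)}$ (the factor $Z^{-2}$ cancels from both products). After disposing of the degenerate cases --- $G$, $G-e$, $G-f$, or $G-e-f$ disconnected, or $e,f$ a loop or a parallel pair, where one checks directly that both sides vanish or reduce to a smaller graph --- one may assume all these graphs are connected and $e=ab$, $f=cd$ with $a,b,c,d$ distinct. Then the minimal $q$-power in each of $\TTT_e^f,\TTT_f^e,\TTT_{ef},\TTT^{ef}$ equals $1$, attained precisely on spanning trees with the prescribed incidence to $\{e,f\}$, so
\[
\frac{\TTT_e^f\TTT_f^e-\TTT_{ef}\TTT^{ef}}{q^2x_ex_f}\,\Big|_{q=0}=\frac{N_{e\bar f}N_{f\bar e}-N_{ef}N_{\overline{ef}}}{x_ex_f},
\]
where $N_{e\bar f},N_{f\bar e},N_{ef},N_{\overline{ef}}$ are the $\xx$-generating functions of the spanning trees of $G$ containing, respectively, $e$ but not $f$, $f$ but not $e$, both, and neither; the right-hand side is a genuine polynomial, $N_{e\bar f}$ having a factor $x_e$, $N_{f\bar e}$ a factor $x_f$, and $N_{ef}$ a factor $x_ex_f$.

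\emph{The target identity.} Let $\mathfrak F$ be the spanning forests of $G$ with exactly two components, one containing $\{a,c\}$ and the other $\{b,d\}$, let $\mathfrak F'$ be the analogue for the crossed pairing $\{a,d\}\mid\{b,c\}$ (no forest in $\mathfrak F\cup\mathfrak F'$ uses $e$ or $f$), and set $D=\sum_{F\in\mathfrak F}\xx^F-\sum_{F\in\mathfrak F'}\xx^F$. A short inclusion--exclusion over the four tree classes reduces the claim to
\[
N_{e\bar f}N_{f\bar e}-N_{ef}N_{\overline{ef}}=x_ex_f\big(N^{a|b}\,N^{c|d}-N\cdot M\big)=x_ex_f\,D^2 ,
\]
where $N$ is the spanning-tree polynomial of $G$, $N^{a|b}$ and $N^{c|d}$ the generating functions of spanning $2$-forests of $G$ separating $a$ from $b$, resp. $c$ from $d$, and $M$ that of spanning $3$-forests separating $a$ from $b$ and $c$ from $d$. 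Dividing by $N^2$, this is exactly the transfer-current theorem for the uniform spanning tree: $\mu(\JJJ_e)\mu(\JJJ_f)-\mu(\JJJ_e\cap\JJJ_f)=x_ex_f(D/N)^2$. (On $G=K_3$ with $e=12,f=13,g=23$: $N^{a|b}=x_f+x_g$, $N^{c|d}=x_e+x_g$, $N=x_ex_f+x_ex_g+x_fx_g$, $M=1$, $D=x_g$, and the inner identity reads $x_g^2=x_g^2$.)

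\emph{Proving the identity, and the main obstacle.} My preferred route is linear-algebraic: via the all-minors weighted Matrix--Tree theorem, each of $N$, $N^{a|b}$, $N^{c|d}$, $M$, and the two $2$-forest polynomials $\sum_{F\in\mathfrak F}\xx^F$, $\sum_{F\in\mathfrak F'}\xx^F$ is a minor of the Dirichlet-to-Neumann response matrix $\Lambda$ of $G$ on the four-vertex boundary $\{a,b,c,d\}$, up to one common nonzero factor (the polynomial of spanning forests of $G$ with one boundary vertex per component). Substituting, the common factor cancels and $N^{a|b}N^{c|d}-N\cdot M=D^2$ becomes a fixed polynomial identity in the six off-diagonal entries of the symmetric, singular, zero-row-sum $4\times4$ matrix $\Lambda$ --- a Desnanot--Jacobi / Plücker-type relation whose right side collapses to a perfect square precisely because $\Lambda$ is symmetric (mixed minors coincide) and singular with explicitly known kernel --- and this finite check can be carried out directly. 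An alternative, purely combinatorial route is a weight-preserving, sign-reversing involution on the disjoint union of the ordered pairs of spanning trees $(T_1,T_2)$ counted $+$ by $N_{e\bar f}N_{f\bar e}$ and $-$ by $N_{ef}N_{\overline{ef}}$: in the multigraph $T_1\cup T_2\cup\{e,f\}$ one locates a canonical cycle and exchanges a single edge across it between $T_1$ and $T_2$, the choice steered by the positions of $e$ and $f$; the fixed points are the pairs admitting no such exchange, and deleting $e,f$ from them realizes each as an ordered pair of ``crossed'' $2$-forests, which reassemble with signs into $D^2$. In either approach the real difficulty is not that $N_{e\bar f}N_{f\bar e}-N_{ef}N_{\overline{ef}}$ is \emph{some} perfect square but that its square root is the specific signed generating function $D$: in Route A this is the sign-and-orientation bookkeeping in the all-minors Matrix--Tree theorem (which $2$-forest families enter a given cofactor of $\Lambda$ with which sign), and in Route B a precise enough description of the fixed-point set to recognise its generating function as $D^2$ rather than $D\cdot D'$ for some a priori different $D'$. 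I expect Route A to give the shorter write-up.
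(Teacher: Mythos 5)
Your reduction step contains a genuine error. In the random cluster model $k(\omega)$ counts \emph{all} components of the spanning subgraph $(V,\eta(\omega))$, so the coefficient of $q$ in each of $\TTT_e^f,\TTT_f^e,\TTT_{ef},\TTT^{ef}$ is the generating function of \emph{connected spanning subgraphs} with the prescribed incidence to $e,f$, not of spanning trees. Consequently the first equality in your chain, identifying $(\TTT_e^f\TTT_f^e-\TTT_{ef}\TTT^{ef})/(q^2x_ex_f)\mid_{q=0}$ with the spanning-tree expression $(N_{e\bar f}N_{f\bar e}-N_{ef}N_{\overline{ef}})/(x_ex_f)$, is false, and in fact the literal $q^2$-coefficient is not a perfect square in general. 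Concretely, for the graph of Figure \ref{fig:K4minusOne} (vertices $1,2,3,4$, $e=13$, $f=14$, $g=34$, $h=12$, $k=23$) the $q^2$-coefficient of $\TTT_e^f\TTT_f^e-\TTT_{ef}\TTT^{ef}$ equals $x_ex_f\,x_g^2(x_h+x_k)(x_h+x_k+x_hx_k)$, which is squarefree in its last two factors, while the spanning-tree quantity is $x_ex_f\,x_g^2(x_h+x_k)^2$. The statement as intended (see the paragraph before Theorem \ref{thm:trees}: the UST model is the limit $p,q\to 0$ with $q/p\to 0$) requires, after extracting the $q^2$-coefficient, also taking the lowest-degree homogeneous part in $\xx$; since among connected spanning subgraphs the minimal edge count is attained exactly by spanning trees, that homogeneous component of degree $2(|V|-1)$ is precisely $T_{e\bar f}T_{f\bar e}-T_{ef}T_{\overline{ef}}$, and only then does your ``target identity'' apply. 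You need to add this step (or reinterpret the specialization accordingly); as written, your extraction claims something false about what $q=0$ picks out.

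Beyond that, your target identity and the description of $\mathfrak F$, $\mathfrak F'$ (spanning $2$-forests realizing the two pairings of the endpoints of $e$ and $f$) are the correct content of Feder--Mihail's Theorem 2.1, and both of your proposed routes --- the all-minors matrix-tree/transfer-current computation on the response matrix, and the exchange involution on pairs of spanning trees --- are the standard proofs in the cited literature; note that this paper does not prove Theorem \ref{thm:trees} at all but quotes it from \cite{feder1992} with pointers to proofs, so there is no in-paper argument to compare against. One smaller omission: the case where $e$ and $f$ share exactly one endpoint is neither a loop nor a parallel pair, so it is not excluded by your list of degeneracies and cannot be reduced to the case of four distinct endpoints; it is handled by the same argument with $\mathfrak F'=\varnothing$ (this is exactly what happens in the example above), but your write-up should say so explicitly.
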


In other words, in this case the negative correlation holds because the difference in question is a single square.

\begin{example}
In Example \ref{ex:K3} we have 
$$\frac{x_ex_fx_g(q^3-q^4) + x_ex_fx_g^2(q^2 - q^3)}{q^2x_e x_f} \mid_{q=0} = x_g^2.$$
This example is sufficiently small that there is no subtraction inside the square on the right. However, already for the complete graph on four vertices the expression squared is not subtraction-free. 
\end{example}

\section{Formula for $\MMM_{ef}(1)$} \label{sec:formula}

    \begin{definition}
        Fix two edges $e,f\in E$, a subset $F\subseteq E^{ef}$ is a \textit{paracel}\footnote{The word ``paracel'' is inspired by Vietnam's Paracel Islands, consisting of many small islands similar to connected components of a graph.} if the same two connected components of $\eta(F)$ are connected by both $e$ and $f$. Equivalently, $\eta(F+e)$ and $\eta(F+f)$ have the same connected components (as sets of vertices), and $$k(F+e) = k(F+f) = k(F) - 1.$$ Additionally, we call an edge in $E^{ef} - F$ a \textit{smoot} for a paracel $F$ if it also connects the same two connected components of $\eta(F)$ connected by $e$ and $f$.
    \end{definition}

    \begin{example}
        In Figure \ref{fig:paracel}, $e$ and $f$ are the red edges. The first two subgraphs from the left are paracels. The third subgraph is not a paracel since $e$ does not connect two connected components. The last subgraph is not a paracel since $e$ and $f$ do not connect the same two connected components.
    
        \begin{figure}[h!]
            \centering
            \includegraphics[scale = 0.8]{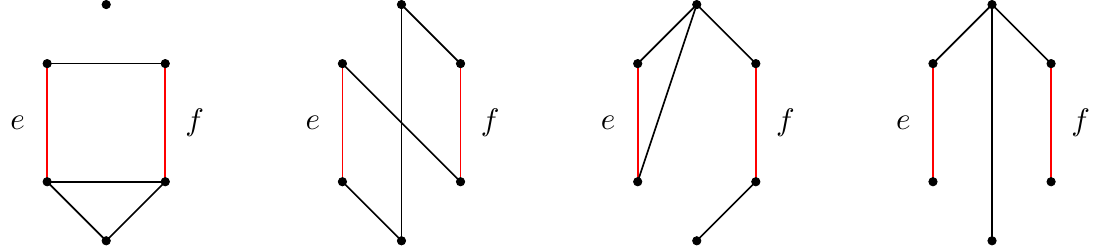}
            \caption{Examples and non-examples of paracels}
            \label{fig:paracel}
        \end{figure}
    \end{example}

    \begin{example}
        In Figure \ref{fig:smoots}, $e$ and $f$ are the red edges. The subgraph formed by the black edges is a paracel. The orange edges are smoots.
    
        \begin{figure}[h!]
            \centering
            \includegraphics[scale = 0.8]{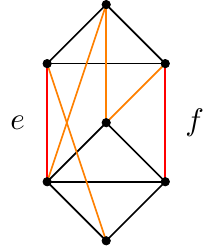}
            \caption{Smoots}
            \label{fig:smoots}
        \end{figure}
    \end{example}

    \begin{definition}\label{def:compatible}
        Given disjoint subsets $\beta,\gamma\subseteq E^{ef}$, a subset $\alpha\subseteq E^{ef}$ is \textit{compatible} with $\beta,\gamma$  if
        \begin{itemize}
            \item $\alpha$ is disjoint from both $\beta$ and $\gamma$;
            \item $\gamma + \alpha$ is a paracel;
            \item every edge in $\beta$ is a smoot in $\eta(\gamma + \alpha)$.
        \end{itemize}
        Let $A_{\beta,\gamma} = \{\alpha:\text{$\alpha$ is compatible with $\beta,\gamma$}\}$. Two subsets $\alpha,\alpha'\in A_{\beta,\gamma}$ are \textit{twins} if $\alpha\cap\alpha'$ is also in $A_{\beta,\gamma}$. Note that every $\alpha\in A_{\beta,\gamma}$ is a twin with itself since $\alpha\cap \alpha = \alpha$. Let $B_{\beta,\gamma}$ be the set of monomials that can be written as $\mathbf{x}^\alpha \mathbf{x}^{\alpha'}$ for some twins $\alpha,\alpha'\in A_{\beta,\gamma}$.
    \end{definition}

    The following theorem is the main theorem of this paper. Note that $\MMM_{ef}(1)$ is not only positive as a function, but in fact is a subtraction-free polynomial. This is not the case for $q=0$ as discussed above: while the lowest degree terms in $\MMM_{ef}(0)$ form a single square as in Theorem \ref{thm:trees}, the squared terms may come with opposite signs. 

    \begin{thm}\label{thm:main-thm}
        For any graph $G$ and edges $e,f$, we have
        \begin{equation}\label{eqn:main-eqn}
           \MMM_{ef}(1) = \sum_{\beta,\gamma}\left(\xx^{\beta}\xx^{\gamma}\sum_{\mm\in B_{\beta,\gamma}} \mm\right). 
        \end{equation}
    \end{thm}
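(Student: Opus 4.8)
The plan is to work directly with the monomial expansion of $\MMM_{ef}(1)$ described in Remark~\ref{remark:AB}. Recall that each pair $(A,B)$ of disjoint-from-$\{e,f\}$ subsets of $E^{ef}$ contributes $(q^{k_1(A,B)}-q^{k_2(A,B)})\xx^{A+B}/(1-q)$ to $\MMM_{ef}(q)$, where $k_1 = k(A+e)+k(B+f)$ and $k_2 = k(A+e+f)+k(B)$. Setting $q=1$, a pair $(A,B)$ contributes $k_1(A,B)-k_2(A,B)$ copies of the monomial $\xx^{A}\xx^{B} = \xx^{A+B}$. The first step is to analyze the integer $d(A,B) := k_1(A,B)-k_2(A,B)$ combinatorially. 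Since adding an edge to a graph either leaves the number of components fixed or drops it by one, we have $k(A+e+f) \in \{k(A+e), k(A+e)-1\}$ and $k(B+f) \in \{k(B), k(B)-1\}$, so $d(A,B) \in \{-1,0,1\}$ in the obvious way: $d(A,B) = 1$ exactly when $f$ is a "bridge" (between-component edge) for $\eta(A+e)$ but not for $\eta(B)$; it is $\le 0$ otherwise. One then checks (using that $e$ is between-components in $A+e$ automatically only if $A+e$ has $e$'s endpoints separated — here one must be careful, $e$ could be a loop or parallel) the precise condition: $d(A,B)=1$ iff $\eta(A)$ already connects the endpoints of $e$ to each other through the same components, making $A$ "look like a paracel after adding $e$"... more precisely iff, in $\eta(A+e)$, the edge $f$ joins two distinct components, while in $\eta(B)$ it does not. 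I would package this as a lemma: $d(A,B) = [\,f \text{ is between-component in } \eta(A+e)\,] - [\,f \text{ is between-component in } \eta(B+\cdot)\,]$, then reorganize the double sum $\sum_{A,B} d(A,B)\,\xx^{A+B}$ as a sum over the multiset $A \sqcup B =: S$ of how many pairs $(A,B)$ with $A \cup B = S$, $A \cap B = \varnothing$ give $+1$ minus how many give $-1$ — but since coefficients can exceed $1$ we really sum over ordered pairs, so the coefficient of $\xx^S$ in $\MMM_{ef}(1)$ is $\#\{(A,B): A\sqcup B = S, d(A,B)=1\} - \#\{(A,B): A \sqcup B = S, d(A,B) = -1\}$.

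The second and main step is to produce an explicit sign-reversing-type bijection matching this alternating count with the right-hand side of \eqref{eqn:main-eqn}. On the RHS, the monomial $\xx^\beta \xx^\gamma \cdot \xx^\alpha \xx^{\alpha'}$ (for twins $\alpha,\alpha' \in A_{\beta,\gamma}$) has total edge-multiset $\beta \sqcup \gamma \sqcup \alpha \sqcup \alpha'$, and its coefficient in the expanded RHS is the number of ways to write a given multiset in this form. So I need: for every edge-multiset $S$ on $E^{ef}$ (each edge appearing $0$, $1$, or $2$ times), the net count $\#\{(A,B)\text{-pairs with }d=+1\} - \#\{d=-1\}$ equals $\#\{(\beta,\gamma,\alpha,\alpha')\}$ realizing $S$ with the compatibility/twin conditions. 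The idea is to decompose a pair $(A,B)$ according to which edges appear in both $A$ and $B$ — wait, $A,B$ are disjoint, so no edge is in both; rather one decomposes $S = A \sqcup B$. Instead I expect the decomposition to go: given $(A,B)$ with $d(A,B)=+1$, the set $\gamma := $ (the between-component structure forced by $e$) — hmm. Let me restate: I anticipate defining, for a $+1$-pair $(A,B)$, the subsets $\gamma \subseteq A$ and $\alpha = A \setminus \gamma$ so that $\gamma + \alpha = A$ is arranged to be (after adding $e$) a paracel-like configuration, with $\beta \subseteq B$ the smoots and the remaining edges of $B$ forming $\alpha'$; the twin condition $\alpha \cap \alpha' \in A_{\beta,\gamma}$ should correspond exactly to a matching between surviving $+1$ and $-1$ pairs. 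The cleanest route is probably an involution on pairs $(A,B)$ with $d(A,B) = 0$ (these contribute nothing and should be left alone) and a separate structure theorem for $d \neq 0$; or, more likely, an involution on the set of all pairs that cancels all $-1$-pairs against some $+1$-pairs, leaving exactly the twins-parametrized family.

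The third step, assuming the bijection is set up, is routine verification: check that the edge-multiset is preserved (so monomials match), that $\beta,\gamma,\alpha,\alpha'$ produced genuinely satisfy Definition~\ref{def:compatible} (disjointness, $\gamma+\alpha$ a paracel, $\beta$ all smoots) and the twin condition, and conversely that every such $4$-tuple arises. I would also want to sanity-check against Example~\ref{ex:k3q}: for $G = K_3$ with edges $e,f,g$, the RHS should reduce to $x_g q^3 + x_g^2 q^2 \big|_{q=1} = x_g + x_g^2$, coming from $\beta=\gamma=\varnothing$, $\alpha = \alpha' = \{g\}$ (giving $x_g^2$) and $\alpha = \{g\}, \alpha' = \varnothing$ as twins, or $\beta = \{g\}$, $\alpha=\alpha'=\varnothing$ (giving $x_g$) — the bookkeeping of which twin pairs land in $B_{\beta,\gamma}$ is exactly what needs to be pinned down.

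The hard part will be the combinatorial heart of step two: correctly identifying, for a pair $(A,B)$ with $d(A,B) = +1$, the canonical decomposition into $(\beta,\gamma,\alpha,\alpha')$, and proving the matching between the $d=-1$ contributions and the "non-twin" pairs $(\alpha,\alpha')$ is a genuine bijection rather than merely a sign cancellation that loses track of multiplicities. The subtlety is that twins are defined by the condition $\alpha \cap \alpha' \in A_{\beta,\gamma}$, which is a non-obvious closure-type condition on $A_{\beta,\gamma}$; one will likely need to prove first that $A_{\beta,\gamma}$, while not closed under intersection in general, has enough structure (perhaps: it is the set of minimal "connectors" together with things above them, or it has a matroid-like exchange property relative to $\gamma$) that the count of twin pairs versus all ordered pairs in $A_{\beta,\gamma} \times A_{\beta,\gamma}$ matches the $(+1) - (-1)$ tally. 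Establishing that structural property of $A_{\beta,\gamma}$ is where I expect the real work to be.
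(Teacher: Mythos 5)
Your proposal is an outline rather than a proof: the combinatorial heart --- the sign-reversing involution or bijection matching the signed count of pairs $(A,B)$ against the twin-parametrized family --- is precisely the part you leave unspecified, and you say yourself that this is where the real work lies. There are also two concrete errors in the accounting you do carry out. First, a sign slip: since a pair $(A,B)$ contributes $(q^{k_1}-q^{k_2})\xx^{A+B}/(1-q)$, at $q=1$ the pairs with $k_1-k_2=-1$ contribute $+\xx^{A+B}$ and those with $k_1-k_2=+1$ contribute $-\xx^{A+B}$, so the coefficient of $\xx^S$ is $\#\{d=-1\}-\#\{d=+1\}$, the opposite of what you wrote (a pair contributes $k_2-k_1$, not $k_1-k_2$, copies). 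Second, the identity you set out to prove is not the right one: the inner sum on the right of (\ref{eqn:main-eqn}) runs over the \emph{set} of monomials $B_{\beta,\gamma}$, not over twin pairs, and distinct twin pairs can yield the same monomial (the example in Section \ref{sec:formula} with seventeen twin pairs but only fifteen monomials), so matching the signed left-hand count against the number of quadruples $(\beta,\gamma,\alpha,\alpha')$ realizing a multiset $S$ would overcount the right-hand side. You flag this bookkeeping issue in your sanity check but do not resolve it, and in the general-multiset setting it is entangled with the twin condition $\alpha\cap\alpha'\in A_{\beta,\gamma}$, for which the ``structural property of $A_{\beta,\gamma}$'' you hope for is never formulated.

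The paper sidesteps all of this with a reduction you are missing: a deletion--contraction induction on monomials. If $\lambda_g=0$ one deletes $g$; if $\lambda_g=2$ one contracts $g$ (after disposing of the case $g$ parallel to $e$ or $f$, where both sides vanish), and neither operation changes the relevant coefficient on either side. This reduces everything to the single squarefree monomial $\xx^{E^{ef}}$ in which every edge appears exactly once, and there both sides simply count paracels: on the left the cancelling involution is just $(A,B)\mapsto(B,A)$, pairing each negative pair with a positive pair in which neither part is a paracel; on the right, squarefreeness forces $\alpha\cap\alpha'=\varnothing$, so the twin condition makes $\gamma$ itself a paracel, $\beta$ is forced to be exactly its set of smoots, and one only has to exhibit a single admissible split of the remaining edges into twins (take $\alpha$ to be the edges meeting $C_1$ and $\alpha'$ the rest). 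Without this reduction, or a fully worked-out substitute for your general-multiset bijection together with the multiplicity bookkeeping for $B_{\beta,\gamma}$, your plan does not yet constitute a proof.
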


    \begin{example}
        If $e$ or $f$ is a loop in $G$, then from Remark \ref{remark:AB}, the LHS is trivially $0$. For the RHS, there is no possible $\gamma$ and $\alpha$ such that $\gamma + \alpha$ is a paracel. Thus, the RHS is also $0$.
    \end{example}

    \begin{example}
        For $G = K_3$ with three edges $e,f,g$. The pairs of $\beta$ and $\gamma$ with nonempty $B_{\beta,\gamma}$ are
        \begin{center}
            \begin{tabular}{c | c | c | c} 
                 $\beta$ & $\gamma$ & $A_{\beta,\gamma}$ & $B_{\beta,\gamma}$ \\
                 \hline\hline
                 $\varnothing$ & $\varnothing$ & $\{\{g\}\}$ & $\{x_g^2\}$ \\
                 \hline
                 $\varnothing$ & $\{g\}$ & $\{\varnothing\}$ & $\{1\}$
            \end{tabular}
        \end{center}

        Hence,
        \[ \MMM_{ef}(1) = \xx^{\varnothing}\cdot \xx^{\varnothing}\cdot x_g^2 + \xx^{\varnothing}\cdot \xx^{\{g\}}\cdot 1 = x_g^2 + x_g, \]
        which is consistent with Example \ref{ex:k3q}.
    \end{example}
    
    \begin{example}\label{ex:K4-1}
        Let $G$ and $e,f$ as in Figure \ref{fig:K4minusOne}.
        
        \begin{figure}[h!]
            \centering
            \includegraphics[scale = 0.8]{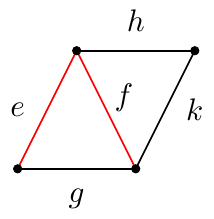}
            \caption{}
            \label{fig:K4minusOne}
        \end{figure}
        
        The pairs of $\beta$ and $\gamma$ with nonempty $B_{\beta,\gamma}$ are
        \begin{center}
            \begin{tabular}{c | c | c | c} 
                 $\beta$ & $\gamma$ & $A_{\beta,\gamma}$ & $B_{\beta,\gamma}$ \\
                 \hline\hline
                 $\varnothing$ & $\varnothing$ & $\{\{g\},\{g,h\},\{g,k\}\}$ & $\{x_g^2,x_g^2x_h^2,x_g^2x_k^2,x_g^2x_h,x_g^2x_k,x_g^2x_hx_k\}$ \\
                 \hline
                 $\varnothing$ & $\{g\}$ & $\{\varnothing,\{h\},\{k\}\}$ & $\{1,x_h^2,x_k^2,x_h,x_k,x_hx_k\}$ \\
                 \hline
                 $\varnothing$ & $\{h\}$ & $\{\{g\}\}$ & $\{x_g^2\}$ \\
                 \hline
                 $\varnothing$ & $\{k\}$ & $\{\{g\}\}$ & $\{x_g^2\}$ \\
                 \hline
                 $\varnothing$ & $\{g,h\}$ & $\{\varnothing\}$ & $\{1\}$ \\
                 \hline
                 $\varnothing$ & $\{g,k\}$ & $\{\varnothing\}$ & $\{1\}$ \\
                 \hline
                 $\{h\}$ & $\varnothing$ & $\{\{g,k\}\}$ & $\{x_g^2x_k^2\}$ \\
                 \hline
                 $\{h\}$ & $\{g\}$ & $\{\{k\}\}$ & $\{x_k^2\}$ \\
                 \hline
                 $\{h\}$ & $\{k\}$ & $\{\{g\}\}$ & $\{x_g^2\}$ \\
                 \hline
                 $\{h\}$ & $\{g,k\}$ & $\{\varnothing\}$ & $\{1\}$ \\
                 \hline
                 $\{k\}$ & $\varnothing$ & $\{\{g,h\}\}$ & $\{x_g^2x_h^2\}$ \\
                 \hline
                 $\{k\}$ & $\{g\}$ & $\{\{h\}\}$ & $\{x_h^2\}$ \\
                 \hline
                 $\{k\}$ & $\{h\}$ & $\{\{g\}\}$ & $\{x_g^2\}$ \\
                 \hline
                 $\{k\}$ & $\{g,h\}$ & $\{\varnothing\}$ & $\{1\}$ \\
            \end{tabular}
        \end{center}

        Hence,
        \[ \MMM_{ef}(1) = (x_g^2+x_g^2x_h^2+x_g^2x_k^2+x_g^2x_h+x_g^2x_k+x_g^2x_hx_k) \]
        \[ + x_g(1+x_h^2+x_k^2+x_h+x_k+x_hx_k) + x_hx_g^2 + x_kx_g^2 + x_gx_h + x_gx_k \]
        \[ + x_hx_g^2x_k^2 + x_hx_gx_k^2 + x_hx_kx_g^2 + x_hx_gx_k + x_kx_g^2x_h^2 + x_kx_gx_h^2 + x_kx_hx_g^2 + x_kx_gx_h. \]
    \end{example}

    \begin{example}
        In Figure \ref{fig:formulaEx1}, consider the pair $\beta = \{i\}$ and $\gamma = \{a,c,d\}$. The set of compatible $\alpha$ is
        \[ A_{\beta,\gamma} = \{\{b\},\{g\},\{b,g\},\{b,h\},\{g,h\},\{b,g,h\}\}. \]
        
        \begin{figure}[h!]
            \centering
            \includegraphics[scale = 0.8]{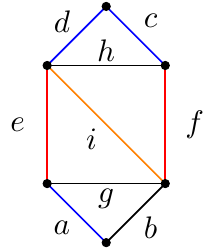}
            \caption{$\beta = \{i\}$ and $\gamma = \{a,c,d\}$}
            \label{fig:formulaEx1}
        \end{figure}
        
        The pairs of twins and the respective monomials are
        \begin{center}
            \begin{tabular}{c | c} 
                 Twins & Monomials \\
                 \hline\hline
                 $\{b\} , \{b\}$ & $x_b^2$ \\
                 \hline
                 $\{b\} , \{b,g\}$ & $x_b^2x_g$ \\
                 \hline
                 $\{b\} , \{b,h\}$ & $x_b^2x_h$ \\
                 \hline
                 $\{b\} , \{b,g,h\}$ & $x_b^2x_gx_h$ \\
                 \hline
                 $\{g\} , \{g\}$ & $x_g^2$ \\
                 \hline
                 $\{g\} , \{b,g\}$ & $x_bx_g^2$ \\
                 \hline
                 $\{g\} , \{g,h\}$ & $x_g^2x_h$ \\
                 \hline
                 $\{g\} , \{b,g,h\}$ & $x_bx_g^2x_h$ \\
                 \hline
                 $\{b,g\} , \{b,g\}$ & $x_b^2x_g^2$ \\
                 \hline
                 $\{b,g\} , \{b,h\}$ & $x_b^2x_gx_h$ \\
                 \hline
                 $\{b,g\} , \{g,h\}$ & $x_bx_g^2x_h$ \\
                 \hline
                 $\{b,g\} , \{b,g,h\}$ & $x_b^2x_g^2x_h$ \\
                 \hline
                 $\{b,h\} , \{b,h\}$ & $x_b^2x_h^2$ \\
                 \hline
                 $\{b,h\} , \{b,g,h\}$ & $x_b^2x_gx_h^2$ \\
                 \hline
                 $\{g,h\} , \{g,h\}$ & $x_g^2x_h^2$ \\
                 \hline
                 $\{g,h\} , \{b,g,h\}$ & $x_bx_g^2x_h^2$ \\
                 \hline
                 $\{b,g,h\} , \{b,g,h\}$ & $x_b^2x_g^2x_h^2$
            \end{tabular}
        \end{center}

        Thus, $B_{\beta,\gamma} = \{x_b^2,$ $x_b^2x_g,$ $x_b^2x_h,$ $x_b^2x_gx_h,$ $x_g^2,$ $x_bx_g^2,$ $x_g^2x_h,$ $x_bx_g^2x_h,$ $x_b^2x_g^2,$ $x_b^2x_g^2x_h,$ $x_b^2x_h^2,$ $x_b^2x_gx_h^2,$ $x_g^2x_h^2,$ $x_bx_g^2x_h^2,$ $x_b^2x_g^2x_h^2\}.$
        Note that even thought there are seventeen pairs of twins, we only get fifteen monomials. This is because $x_b^2x_gx_h$ and $x_bx_g^2x_h$ arise as a product of twins in two different ways. Our formula says that each of those monomials still occurs only once in $B_{\beta,\gamma}$.
    \end{example}
    
\section{Proof of Theorem \ref{thm:main-thm}} \label{sec:proof}

    We will prove Theorem \ref{thm:main-thm} by induction on $|E|$. We will consider an arbitrary monomial $\xx^\lambda$ and show that the coefficient of $\xx^\lambda$ on both sides of (\ref{eqn:main-eqn}) are equal.

    \begin{lemma}
        For each monomial appearing in (\ref{eqn:main-eqn}), the degree of each variable $x_g$ is at most two.
    \end{lemma}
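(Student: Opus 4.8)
The plan is to unwind the definitions on the right-hand side of~\eqref{eqn:main-eqn} and show that no variable can appear to a power exceeding two in any resulting monomial. A generic monomial in the RHS has the form $\xx^\beta \xx^\gamma \mm$ where $\mm = \xx^\alpha \xx^{\alpha'}$ for some twins $\alpha, \alpha' \in A_{\beta,\gamma}$. Fix a variable $x_g$. I would bound its exponent by tracking in which of the sets $\beta$, $\gamma$, $\alpha$, $\alpha'$ the edge $g$ can occur.

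First, I would use the disjointness built into Definition~\ref{def:compatible}: $\beta$, $\gamma$, and $\alpha$ are pairwise disjoint, and likewise $\beta$, $\gamma$, $\alpha'$ are pairwise disjoint (since $\alpha'$ is also compatible with $\beta,\gamma$). So $g$ lies in at most one of $\{\beta,\gamma\}$, and separately $g$ may or may not lie in $\alpha$, and may or may not lie in $\alpha'$. This immediately gives exponent at most $1 + 1 + 1 = 3$ in the worst case where $g \in \gamma$ (say) and $g \in \alpha \cap \alpha'$ — but this case is impossible, because $\alpha$ is required to be disjoint from $\gamma$. Hence if $g \in \beta \cup \gamma$, then $g \notin \alpha$ and $g \notin \alpha'$, so the exponent of $x_g$ in $\xx^\beta\xx^\gamma\mm$ is exactly $1$. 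The only way to get exponent $2$ is $g \notin \beta \cup \gamma$ and $g \in \alpha \cap \alpha'$, and there is no mechanism to reach $3$. This is essentially the whole argument.

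The one point that needs a careful sentence is that the monomials in $B_{\beta,\gamma}$ are precisely products $\xx^\alpha\xx^{\alpha'}$ of pairs of twins, so every such monomial is a product of at most two sets each of which is disjoint from $\beta \cup \gamma$; then multiplying by $\xx^\beta\xx^\gamma$ adds at most one more copy of each variable, but only for variables in $\beta \cup \gamma$, which are disjoint from $\alpha$ and $\alpha'$. I would phrase the proof as: given $\xx^\lambda$ a monomial in~\eqref{eqn:main-eqn}, write $\lambda$ as the multiset union $\beta \uplus \gamma \uplus \alpha \uplus \alpha'$; partition the edge set by membership, use pairwise disjointness of $\{\beta,\gamma,\alpha\}$ and of $\{\beta,\gamma,\alpha'\}$, and conclude each $g$ appears in at most two of the four sets — but never in $\beta$ or $\gamma$ together with either $\alpha$ or $\alpha'$ — so the multiplicity of $g$ in $\lambda$ is at most $2$.

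I do not anticipate a genuine obstacle here; this is a bookkeeping lemma whose only role is presumably to justify that the monomials $\mm \in B_{\beta,\gamma}$ have bounded degree so that the induction and the coefficient-comparison in the main proof are well-posed. The only thing to be mildly careful about is not to overcount: one must remember that $\alpha$ and $\alpha'$ can overlap each other (they are twins, and $\alpha \cap \alpha' \in A_{\beta,\gamma}$), so $g$ genuinely can sit in both, producing exponent exactly $2$; this shows the bound of two is sharp and cannot be improved to one, consistent with the examples in Section~\ref{sec:formula}.
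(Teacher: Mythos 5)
Your argument for the right-hand side is correct and is essentially the paper's own: pairwise disjointness of $\beta,\gamma$ and of each of them with any compatible $\alpha$ forces a variable in $\xx^\beta\xx^\gamma$ to have exponent one and to be absent from $\mm=\xx^\alpha\xx^{\alpha'}$, while a variable outside $\beta\cup\gamma$ can occur at most once in each of $\alpha,\alpha'$, giving the bound two (and you are right that the bound is attained when $g\in\alpha\cap\alpha'$). Your version is in fact slightly more explicit than the paper's one-line RHS argument, since you spell out why the degree-one contribution from $\xx^\beta\xx^\gamma$ cannot combine with a contribution from $\mm$.

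However, the lemma is about \emph{every} monomial appearing in~(\ref{eqn:main-eqn}), and that equation has two sides: you have said nothing about the left-hand side, i.e.\ about the monomials of $\MMM_{ef}(1)$ itself. This half is needed for the way the lemma is used afterwards: the proof of Theorem~\ref{thm:main-thm} compares the coefficient of an arbitrary $\xx^\lambda$ on \emph{both} sides and restricts to $\lambda_g\in\{0,1,2\}$, which is only legitimate if monomials with some $\lambda_g\ge 3$ are absent from the LHS as well as the RHS. The missing step is easy but should be stated: every monomial of $\TTT_e^f\TTT_f^e-\TTT_{ef}\TTT^{ef}$ is a product of two monomials $\xx^{F_1}\xx^{F_2}$ with $F_1,F_2\subseteq E$ (so each factor is squarefree in the $x$-variables), hence each variable has degree at most two there, and dividing by $x_ex_f(1-q)$ does not increase any exponent. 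With that sentence added, your proof is complete and matches the paper's.
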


    \begin{proof}
        The statement holds for the LHS because each variable in a monomial in $\TTT_e^f\TTT_f^e - \TTT_{ef}\TTT^{ef}$ has degree at most two. On the RHS, since $\beta$ and $\gamma$ are disjoint, and both are disjoint from any $\alpha\in A_{\beta,\gamma}$, any variable in $\xx^\beta\xx^\gamma$ has degree one. The monomials in $\sum_{\mm\in B_{\beta,\gamma}}\mm$ are products $\xx^\alpha\xx^{\alpha'}$, so each variable in $\mm$ has degree at most two.
    \end{proof}

    Hence, it suffices to consider monomials $\xx^\lambda$ with $\lambda_g \in \{0,1,2\}$ for each $g$. Let us say that {\it {equality holds for}} $G$ if the coefficients of this specific $\xx^\lambda$ on the left and on the right of (\ref{eqn:main-eqn}) are equal. 

    \begin{lemma}
        If $\lambda_g = 0$ for $g\in E\backslash\{e,f\}$, then if the equality holds for $G\backslash g$ then it holds for $G$. If $\lambda_g = 2$, then if the equality holds for $G/ g$ (and necessarily new $\lambda$ which differs by setting $\lambda_g = 0$) then it holds for $G$ and the old $\lambda$. Here $G\backslash g$ denotes deletion and $G/g$ denotes contraction.
    \end{lemma}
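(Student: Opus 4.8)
The plan is to fix a monomial $\xx^\lambda$ with $\lambda_g\in\{0,1,2\}$ for every $g$ (legitimate as established above) and to compare the coefficient of $\xx^\lambda$ on the two sides of (\ref{eqn:main-eqn}). When $\lambda_g=0$ I will show that the coefficient of $\xx^\lambda$ in the left side for $G$ equals its coefficient in the left side for $G\setminus g$, and likewise on the right side; then ``equality holds for $G\setminus g$'' forces ``equality holds for $G$''. When $\lambda_g=2$, with $\lambda'$ obtained from $\lambda$ by setting $\lambda_g=0$, I will show the coefficient of $\xx^\lambda$ for $G$ equals the coefficient of $\xx^{\lambda'}$ for $G/g$, on each side separately. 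The only graph-theoretic inputs are the elementary identities $k_{G\setminus g}(F)=k_G(F)$ for $g\notin F$, and $k_{G/g}(F)=k_G(F+g)$ for $g\notin F$ (contracting $g$ merges its two endpoints, which already lie in a common component once $g$ is present), together with the fact that the quotient map $G\to G/g$ carries the vertex-sets of the components of $\eta_G(F+g)$ bijectively onto those of $\eta_{G/g}(F)$.

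For the left-hand side I use the description from the Remark after Conjecture~\ref{conj:main}: $\MMM_{ef}(1)$ is the sum over ordered pairs $(A,B)$ of subsets of $E^{ef}$ of $(k_2(A,B)-k_1(A,B))\,\xx^A\xx^B$, a coefficient that equals $0$ or $\pm1$. If $\lambda_g=0$, the pairs with $\xx^A\xx^B=\xx^\lambda$ are exactly those with $g\notin A$ and $g\notin B$; for these the four relevant component counts are unchanged by deleting $g$, and the very same pairs occur for $G\setminus g$. If $\lambda_g=2$, such pairs have $g\in A\cap B$, so $A=A'+g$, $B=B'+g$; using $k_{G/g}(F)=k_G(F+g)$ one gets $k_i^{G}(A'+g,B'+g)=k_i^{G/g}(A',B')$ for $i=1,2$, and $(A,B)\mapsto(A',B')$ is a bijection onto the pairs for $G/g$ with $\xx^{A'}\xx^{B'}=\xx^{\lambda'}$. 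In both cases the coefficients match term by term.

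For the right-hand side, observe that the coefficient of a monomial is the number of pairs $(\beta,\gamma)$ for which $\xx^\lambda/(\xx^\beta\xx^\gamma)$ is an honest monomial lying in $B_{\beta,\gamma}$, so it suffices to identify the monomials of $B_{\beta,\gamma}$ of $x_g$-degree $0$, respectively $2$. If $\lambda_g=0$ then necessarily $g\notin\beta\cup\gamma$, and only twins $\alpha,\alpha'\in A_{\beta,\gamma}$ avoiding $g$ contribute; since for subsets of $(E\setminus g)^{ef}$ the conditions ``$\gamma+\alpha$ is a paracel'' and ``each edge of $\beta$ is a smoot of $\eta(\gamma+\alpha)$'' are literally identical in $G$ and in $G\setminus g$, the $x_g$-degree-$0$ parts of $A_{\beta,\gamma}$ and of $B_{\beta,\gamma}$ coincide for the two graphs. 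If $\lambda_g=2$, a contributing twin product $\xx^\alpha\xx^{\alpha'}$ must have $g\in\alpha\cap\alpha'$, which, since $\alpha,\alpha'$ are disjoint from $\beta$ and $\gamma$, forces $g\notin\beta\cup\gamma$ as well; writing $\alpha=\bar\alpha+g$, $\alpha'=\bar\alpha'+g$, I claim that $\bar\alpha\mapsto\bar\alpha+g$ is a bijection from $A^{G/g}_{\beta,\gamma}$ onto $\{\alpha\in A^{G}_{\beta,\gamma}:g\in\alpha\}$ that commutes with intersection, hence preserves the twin relation, and multiplies the associated monomial by $x_g^2$. Granting this, the $x_g$-degree-$2$ part of $B^G_{\beta,\gamma}$ is exactly $x_g^2\cdot B^{G/g}_{\beta,\gamma}$, and counting the pairs $(\beta,\gamma)$ on each side yields equality of the coefficients of $\xx^\lambda$ for $G$ and of $\xx^{\lambda'}$ for $G/g$.

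I expect the crux to be the bijection claim underlying the contraction step on the right-hand side, namely that ``$\gamma+\bar\alpha+g$ is a paracel in $G$'' $\Leftrightarrow$ ``$\gamma+\bar\alpha$ is a paracel in $G/g$'': the equality of the three component counts in the definition of paracel is immediate from $k_{G/g}(F)=k_G(F+g)$, but one must also check that $\eta(\,\cdot+e)$ and $\eta(\,\cdot+f)$ induce the same partition of vertices into components before and after contraction, and that an edge of $\beta$ is a smoot for $\gamma+\bar\alpha+g$ in $G$ exactly when it is a smoot for $\gamma+\bar\alpha$ in $G/g$ — here using that the two components joined by $e$ (equivalently by $f$) in $G$ are carried by the quotient map to the two joined by $e$ in $G/g$. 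A minor point, causing no real trouble because $B_{\beta,\gamma}$ is a set rather than a multiset, is that a single monomial may be a product of twins in several ways; this collapsing happens uniformly for $G$ and for $G/g$, so the bijection at the level of sets of monomials is unaffected.
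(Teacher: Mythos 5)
Your proposal is correct and takes essentially the same route as the paper: fix the monomial $\xx^\lambda$ and check that deletion (when $\lambda_g=0$) and contraction (when $\lambda_g=2$) preserve its coefficient on each side of (\ref{eqn:main-eqn}) separately, using $k_{G\backslash g}(F)=k_G(F)$ and $k_{G/g}(F)=k_G(F+g)$ on the left and the correspondence of components (hence of paracels, smoots, and twins, with $g$ forced into $\alpha\cap\alpha'$) on the right. The only divergence is the subcase where $g$ is parallel to $e$ or $f$: the paper disposes of it separately, arguing via the bijection of Remark \ref{remark:AB} that the coefficient of $\xx^\lambda$ already vanishes on both sides for $G$ itself, whereas you absorb it into the uniform contraction argument; this is legitimate, since your identities are insensitive to parallelism and the hypothesis ``equality holds for $G/g$'' still applies when $e$ or $f$ becomes a loop in $G/g$ (where both sides are trivially zero, as noted in the example following Theorem \ref{thm:main-thm}). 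If anything, your write-up gives more detail than the paper on the right-hand-side bijection, in particular that $\bar\alpha\mapsto\bar\alpha+g$ commutes with intersection and hence preserves the twin relation.
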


    \begin{proof}
     Assume $\lambda_g = 0$. We claim that passing from $G$ to $G\backslash g$ does not change the coefficient of $\xx^\lambda$ both on the left and on the right. On the left, this is true because the pairs of subgraphs that contribute $\xx^\lambda$ in $\TTT_e^f\TTT_f^e$ and $\TTT_{ef}\TTT^{ef}$ do not depend on whether the edge $g$ is included in $G$. The same is true for the accompanying power of $q$. On the right, this is true because whether $\alpha$ is compatible with $\beta, \gamma$, and whether $\alpha, \alpha'$ are twins do not depend on whether the edge $g$ is included in $G$. 

     Assume now $\lambda_g = 2$. If $g$ is parallel to one of $e,f$ then the coefficients of $\xx^\lambda$ on both sides are $0$. Indeed, since $\alpha, \beta, \gamma$ are all disjoint, the only way for $\lambda_g = 2$ to hold is if $g \in \alpha, \alpha'$. Then $\gamma + \alpha$ cannot be a paracel, as either $e$ or $f$ would not be connecting two distinct connected components. For the left hand side, assume $g$ is parallel to $e$. As we know pairs of graphs that contribute an instance of $\xx^\lambda$ in $\TTT_e^f\TTT_f^e$ are in bijection with pairs of graphs that contribute an instance of $\xx^\lambda$ in $\TTT_{ef}\TTT^{ef}$, via moving $e$ from one subgraph to another - see Remark \ref{remark:AB}. Due to the fact that $g$ is parallel to $e$ and $\lambda_g = 2$, moving $e$ in this way does not change the number of connected components in either of the subgraphs involved, and thus the power of $q$ is also the same, leading to cancellation. 

     If we assume now that $g$ is not parallel to either $e$ or $f$, we claim that passing from $G$ to $G/g$ does not change the coefficient of $\xx^\lambda$ both on the left and on the right. Indeed, contracting $g$ present in both subgraph does not change powers of $q$ that appear on the left hand side. Also, as $g \in \alpha, \alpha'$ as explained above, contracting it leaves $\alpha, \alpha'$ compatible with $\beta, \gamma$, and also leaves $\alpha, \alpha'$ to be twins. This completes the proof.
     \end{proof}

    Thus, we only need to consider $\xx^\lambda = \xx^{E^{ef}}$ where $\lambda_g = 1$ for all $g\in E^{ef}$. In other words, it is sufficient to prove equality of coefficients when each edge in $E^{ef}$ appears in $\lambda$ exactly once. 

\subsection{The LHS}

    First, we study the LHS of equation (\ref{eqn:main-eqn}). Recall that each pair of subsets $(A,B)$ of $E^{ef}$ contributes $(q^{k_1(A,B)} - q^{k_2(A,B)})\xx^{A+B}/(1-q)$ to $\MMM_{ef}(q)$, where
    \[ k_1(A,B) = k(A+e)+k(B+f), \quad\text{and}\quad k_2(A,B) = k(A+e+f) + k(B).\]
    Furthermore, $k(A+e) - k(A+e+f) \in \{0,1\}$, and $k(B) - k(B+f)\in\{0,1\}$, so $k_1(A,B) - k_2(A,B)\in \{-1,0,1\}$.

    If $k_1(A,B) - k_2(A,B) = 0$, then this pair contributes $0$ to $\MMM_{ef}(q)$ and hence $0$ to the LHS. If $k_1(A,B) - k_2(A,B) = -1$, then this pair contributes $q^{k_1(A,B)}\xx^{A+B}$ to $\MMM_{ef}(q)$ and hence $\xx^{A+B}$ to $\MMM_{ef}(1)$. We call such pair \textit{positive}. If $k_1(A,B) - k_2(A,B) = 1$, then this pair contributes $-q^{k_2 (A,B)}\xx^{A+B}$ to $\MMM_{ef}(q)$ and hence $-\xx^{A+B}$ to $\MMM_{ef}(1)$. We call such pair \textit{negative}.

    For $k_1(A,B) - k_2(A,B) = -1$, we need
    \[ k(A+e) - k(A+e+f) = 0, \quad\text{and}\quad k(B+f) - k(B) = -1. \]
    This means that the two vertices incident to $f$ are in the same connected components in $A+e$ but are not in the same connected components in $B$. Figure \ref{fig:PosA} shows all possible configurations regarding the connected components containing the four (not necessarily distinct) vertices incident to $e$ and $f$ in $A$. The blue loops denote the connected components, and $e$ and $f$ are the two dashed edges. Figure \ref{fig:PosB} shows the possible configurations for $B$. Note that the configurations in Figure \ref{subfig:PosAe} and \ref{subfig:PosBe} are paracels.

    \begin{figure}[h!]
     \centering
        \begin{subfigure}[b]{0.18\textwidth}
            \centering
            \includegraphics[scale = 0.8]{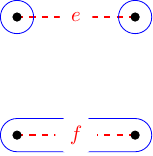}
            \caption{}
            \label{subfig:PosAa}
        \end{subfigure}
        \begin{subfigure}[b]{0.18\textwidth}
            \centering
            \includegraphics[scale = 0.8]{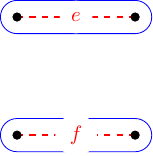}
            \caption{}
            \label{subfig:PosAb}
        \end{subfigure}
        \begin{subfigure}[b]{0.18\textwidth}
            \centering
            \includegraphics[scale = 0.8]{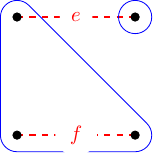}
            \caption{}
            \label{subfig:PosAc}
        \end{subfigure}
        \begin{subfigure}[b]{0.18\textwidth}
            \centering
            \includegraphics[scale = 0.8]{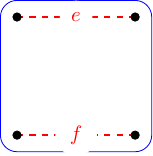}
            \caption{}
            \label{subfig:PosAd}
        \end{subfigure}
        \begin{subfigure}[b]{0.18\textwidth}
            \centering
            \includegraphics[scale = 0.8]{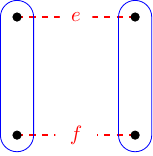}
            \caption{}
            \label{subfig:PosAe}
        \end{subfigure}

        \caption{$k(A+e) - k(A+e+f) = 0$}
        \label{fig:PosA}
    \end{figure}

    \begin{figure}[h!]
     \centering
        \begin{subfigure}[b]{0.18\textwidth}
            \centering
            \includegraphics[scale = 0.8]{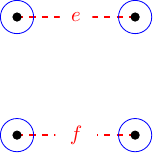}
            \caption{}
            \label{subfig:PosBa}
        \end{subfigure}
        \begin{subfigure}[b]{0.18\textwidth}
            \centering
            \includegraphics[scale = 0.8]{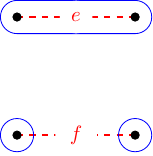}
            \caption{}
            \label{subfig:PosBb}
        \end{subfigure}
        \begin{subfigure}[b]{0.18\textwidth}
            \centering
            \includegraphics[scale = 0.8]{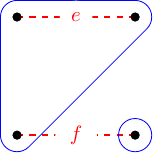}
            \caption{}
            \label{subfig:PosBc}
        \end{subfigure}
        \begin{subfigure}[b]{0.18\textwidth}
            \centering
            \includegraphics[scale = 0.8]{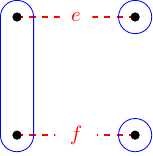}
            \caption{}
            \label{subfig:PosBd}
        \end{subfigure}
        \begin{subfigure}[b]{0.18\textwidth}
            \centering
            \includegraphics[scale = 0.8]{Fig/Type/PosAe.pdf}
            \caption{}
            \label{subfig:PosBe}
        \end{subfigure}

        \caption{$k(B+f) - k(B) = -1$}
        \label{fig:PosB}
    \end{figure}

    Similarly, for $k_1(A,B) - k_2(A,B) = 1$, we need
    \[ k(A+e) - k(A+e+f) = 1, \quad\text{and}\quad k(B+f) - k(B) = 0. \]
    This means that the two vertices incident to $f$ are not in the same connected components in $A+e$ but are in the same connected components in $B$. Figure \ref{fig:NegA} and \ref{fig:NegB} show the possible configurations for this case.

    \begin{figure}[h!]
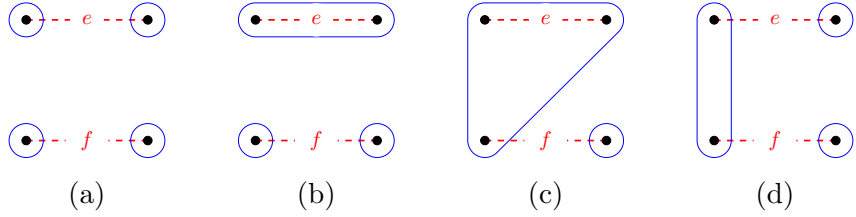

     \centering
        \begin{subfigure}[b]{0.18\textwidth}
            \centering
            \includegraphics[scale = 0.8]{Fig/Type/PosBa.pdf}
            \caption{}
            \label{subfig:NegAa}
        \end{subfigure}
        \begin{subfigure}[b]{0.18\textwidth}
            \centering
            \includegraphics[scale = 0.8]{Fig/Type/PosBb.pdf}
            \caption{}
            \label{subfig:NegAb}
        \end{subfigure}
        \begin{subfigure}[b]{0.18\textwidth}
            \centering
            \includegraphics[scale = 0.8]{Fig/Type/PosBc.pdf}
            \caption{}
            \label{subfig:NegAc}
        \end{subfigure}
        \begin{subfigure}[b]{0.18\textwidth}
            \centering
            \includegraphics[scale = 0.8]{Fig/Type/PosBd.pdf}
            \caption{}
            \label{subfig:NegAd}
        \end{subfigure}

        \caption{$k(A+e) - k(A+e+f) = 1$}
        \label{fig:NegA}
    \end{figure}

    \begin{figure}[h!]
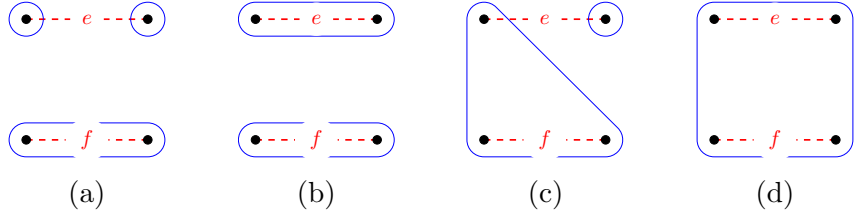

     \centering
        \begin{subfigure}[b]{0.18\textwidth}
            \centering
            \includegraphics[scale = 0.8]{Fig/Type/PosAa.pdf}
            \caption{}
            \label{subfig:NegBa}
        \end{subfigure}
        \begin{subfigure}[b]{0.18\textwidth}
            \centering
            \includegraphics[scale = 0.8]{Fig/Type/PosAb.pdf}
            \caption{}
            \label{subfig:NegBb}
        \end{subfigure}
        \begin{subfigure}[b]{0.18\textwidth}
            \centering
            \includegraphics[scale = 0.8]{Fig/Type/PosAc.pdf}
            \caption{}
            \label{subfig:NegBc}
        \end{subfigure}
        \begin{subfigure}[b]{0.18\textwidth}
            \centering
            \includegraphics[scale = 0.8]{Fig/Type/PosAd.pdf}
            \caption{}
            \label{subfig:NegBd}
        \end{subfigure}

        \caption{$k(B+f) - k(B) = 0$}
        \label{fig:NegB}
    \end{figure}

    \begin{prop}\label{prop:LHS-terms}
        The coefficient of $\xx^{G^{ef}}$ in $\MMM_ef(1)$ equals the number of paracels in $G$.
    \end{prop}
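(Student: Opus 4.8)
The plan is to read off the coefficient of $\xx^{E^{ef}}$ from the pair-by-pair description of $\MMM_{ef}(1)$ established above, collapse it to a difference of two connectivity counts, and identify that difference with the number of paracels. First, a pair $(A,B)$ of subsets of $E^{ef}$ can contribute to the squarefree monomial $\xx^{E^{ef}}$ only if $A$ and $B$ are disjoint and $A\cup B=E^{ef}$, i.e.\ if $B=\bar A:=E^{ef}\setminus A$. For $S\subseteq E^{ef}$ let $u(S)=1$ if the two endpoints of $f$ lie in the same component of $\eta(S)$, and $u(S)=0$ otherwise. Since $k(A+e)-k(A+e+f)=1-u(A+e)$ and $k(\bar A)-k(\bar A+f)=1-u(\bar A)$, one gets $k_1(A,\bar A)-k_2(A,\bar A)=u(\bar A)-u(A+e)$, so the signed contribution of the pair $(A,\bar A)$ to the coefficient of $\xx^{E^{ef}}$ equals $-(k_1-k_2)=u(A+e)-u(\bar A)$ --- this is $+1$ for a positive pair, $-1$ for a negative pair, and $0$ for a pair that is neither. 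Summing over all $A\subseteq E^{ef}$ and reindexing the second half by $B=\bar A$, the coefficient of $\xx^{E^{ef}}$ in $\MMM_{ef}(1)$ equals
\[
\sum_{A\subseteq E^{ef}}u(A+e)\;-\;\sum_{B\subseteq E^{ef}}u(B).
\]

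Next, because adjoining the edge $e$ can only merge components, $u(S)=1$ implies $u(S+e)=1$; hence $\{S:u(S)=1\}\subseteq\{S:u(S+e)=1\}$ and the displayed difference equals the number of subsets $S\subseteq E^{ef}$ for which the endpoints of $f$ are connected in $\eta(S+e)$ but not in $\eta(S)$. I would then finish by showing that this set of subsets is exactly the set of paracels. If $S$ is a paracel, then $k(S+f)=k(S)-1$ forces $u(S)=0$, and since $\eta(S+e)$ and $\eta(S+f)$ have the same partition of $V$ into components, the endpoints of $f$ are connected in $\eta(S+e)$, so $u(S+e)=1$. Conversely, if the endpoints of $f$ become connected only after adjoining $e$, then $e$ must join two distinct components of $\eta(S)$ (otherwise $\eta(S+e)$ and $\eta(S)$ have the same components), and those two components must be precisely the ones containing the two endpoints of $f$; hence $e$ and $f$ join the same pair of components of $\eta(S)$, $k(S+e)=k(S+f)=k(S)-1$, and $\eta(S+e)=\eta(S+f)$ as partitions of $V$, so $S$ is a paracel. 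Combining the three steps, the coefficient of $\xx^{E^{ef}}$ in $\MMM_{ef}(1)$ equals the number of paracels of $G$.

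The part I expect to require the most care is this last equivalence in the degenerate configurations --- when $e$ and $f$ share an endpoint, are parallel edges, or have further coincidences among their four endpoints --- which are exactly the configurations drawn in Figures~\ref{fig:PosA}--\ref{fig:NegB}. In each of those cases one must check that ``adjoining $e$ connects the endpoints of $f$'' genuinely forces $e$ and $f$ to join the same two components of $\eta(S)$, and, dually, that when $e$ or $f$ is a loop there are no paracels at all, consistent with $u(S+e)\equiv u(S)$ (if $e$ is a loop) or $u\equiv 1$ (if $f$ is a loop) making the count zero. Everything else is routine bookkeeping with $\{0,1\}$-valued indicators.
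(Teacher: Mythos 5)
Your proof is correct, and while it rests on the same underlying symmetry as the paper's argument---the complementation swap $A \leftrightarrow E^{ef}-A$---it organizes the count differently. The paper's proof runs an explicit sign-reversing correspondence: each negative pair $(A,E^{ef}-A)$ is cancelled against the positive pair $(E^{ef}-A,A)$, whose two halves are then necessarily non-paracels, and the surviving positive pairs (those with at least one half a paracel) are matched with paracels. You instead encode the signed contribution of the pair $(A,E^{ef}-A)$ as the difference of indicators $u(A+e)-u(E^{ef}-A)$, sum over all $A$, reindex the second sum by complementation, and use monotonicity of $u$ under adjoining $e$ to collapse the total to $\#\{S\subseteq E^{ef} : u(S+e)=1,\ u(S)=0\}$, which you then identify with the set of paracels; your converse argument (any $e$--$f$ connecting path must use $e$, so $e$ joins exactly the two components containing the endpoints of $f$) is exactly the needed bridge to the paracel definition. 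The telescoping version buys you something concrete: the paper's last step tacitly requires checking that surviving pairs and paracels correspond one-to-one even in the delicate situations where both $A$ and $E^{ef}-A$ are paracels, or where a pair containing a paracel half contributes $0$ rather than $+1$ (namely when the endpoints of $f$ are already joined in the complementary half); your identity absorbs all of these cases, as well as the loop and parallel-edge degeneracies you flag at the end, without any case analysis. So this is a valid proof of the proposition, with tighter bookkeeping than, though the same essential mechanism as, the argument in the paper.
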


    \begin{proof}
        We only need to consider pairs of subsets $(A,E^{ef} - A)$. From Figure \ref{fig:PosA}, \ref{fig:PosB}, \ref{fig:NegA}, \ref{fig:NegB}, one can see a clear correspondence between negative pairs $(A,E^{ef}-A)$ and positive pairs $(A',E^{ef}-A')$ such that neither $A'$ nor $E^{ef}-A'$ is a paracel. This correspondence is given by $(A,E^{ef}-A) \leftrightarrow(E^{ef}-A,A)$.

        Hence, the only remaining pairs are $(A,E^{ef}-A)$ such that at least one of $A$ and $E^{ef}-A$ is a paracel, and these pairs each contributes $+1$ to the coefficient of $\xx^{G^{ef}}$.
    \end{proof}

\subsection{The RHS}

    We now show a similar statement as Proposition \ref{prop:LHS-terms} for the RHS. Consider a term $\xx^\beta\xx^\gamma\mm$ on the RHS. Note that $\mm$ is square-free, so it is a product $\xx^\alpha\xx^{\alpha'}$ for some $\alpha,\alpha'$ compatible with $\beta,\gamma$ and $\alpha\cap\alpha' = \varnothing$. This means that $\gamma+\alpha\cap\alpha' = \gamma$ is itself a paracel. Thus, every term $\xx^\beta\xx^\gamma\mm = \xx^{G^{ef}}$ on the RHS has to come from a paracel $\gamma$. We will show that the converse is true.

    \begin{prop}\label{prop:RHS-term}
        For each paracel $\gamma$, there is exactly one choice of $\beta$ and $\mm$ such that $\xx^\beta\xx^\gamma\mm = \xx^{G^{ef}}$.
    \end{prop}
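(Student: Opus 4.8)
The plan is to show that, for a fixed paracel $\gamma$, the set $\beta$ is forced to be exactly the set of all smoots of $\gamma$; this $\beta$ then determines $\mm$ uniquely, and one checks it is realizable. For the reduction, recall that we are computing the coefficient of the squarefree monomial $\xx^{E^{ef}}$. Hence any term $\xx^\beta\xx^\gamma\mm=\xx^{E^{ef}}$ on the right-hand side has $\mm$ squarefree, so $\mm=\xx^\alpha\xx^{\alpha'}$ for twins $\alpha,\alpha'\in A_{\beta,\gamma}$ with $\alpha\cap\alpha'=\varnothing$, and then $\beta,\gamma,\alpha,\alpha'$ are pairwise disjoint with union $E^{ef}$. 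In particular $\alpha\cup\alpha'=E^{ef}\setminus(\gamma\cup\beta)$, so $\mm=\xx^{E^{ef}\setminus(\gamma\cup\beta)}$ depends only on $\beta$. It therefore suffices to prove that exactly one $\beta$ admits such a decomposition, and to produce it.

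Next I would pin down $\beta$. On the one hand, $\varnothing=\alpha\cap\alpha'$ must itself be a twin of $\alpha$ (and of $\alpha'$), i.e. $\varnothing\in A_{\beta,\gamma}$; since $\gamma$ is already a paracel, this is equivalent to requiring that every edge of $\beta$ be a smoot of $\gamma$, so $\beta$ is contained in the set $S$ of smoots of $\gamma$. On the other hand, if some $s\in S$ were not in $\beta$, then $s$ would lie in $\alpha$ or $\alpha'$, say $s\in\alpha$; but $\eta(\gamma+s)$ already merges the two special components of $\eta(\gamma)$, so the two endpoints of $e$ lie in a single component of $\eta(\gamma+\alpha)$, and $\gamma+\alpha$ is not a paracel, contradicting $\alpha\in A_{\beta,\gamma}$. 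Hence $\beta=S$, which establishes uniqueness of $\beta$ (and thus of $\mm$).

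It remains to realize this $\beta$, i.e. to exhibit twins $\alpha,\alpha'$. Let $C_1,C_2$ be the two components of $\eta(\gamma)$ joined by $e$ and $f$, and put $R:=E^{ef}\setminus(\gamma\cup\beta)$; since $\beta=S$ contains every edge joining $C_1$ to $C_2$, no edge of $R$ joins $C_1$ to $C_2$. Declare $\alpha$ to be the set of edges of $R$ with no endpoint in $C_2$, and $\alpha'$ the set of edges of $R$ with an endpoint in $C_2$. Since no edge of $\alpha$ meets $C_2$ and $C_2$ is a full component of $\eta(\gamma)$, the set $C_2$ is still a full component of $\eta(\gamma+\alpha)$; hence the endpoints of $e$ (and of $f$) lie in distinct components of $\eta(\gamma+\alpha)$, so $\gamma+\alpha$ is a paracel, and likewise $C_1$ is a full component of $\eta(\gamma+\alpha')$, so $\gamma+\alpha'$ is a paracel. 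Any $s\in\beta$ joins a vertex of $C_1$ to a vertex of $C_2$, hence joins the two special components of $\eta(\gamma+\alpha)$ (and of $\eta(\gamma+\alpha')$), so it remains a smoot there; and $\alpha\cap\alpha'=\varnothing\in A_{\beta,\gamma}$ makes $\alpha,\alpha'$ twins. This produces the pair $(\beta,\mm)$ with $\mm=\xx^{R}$, finishing existence. Combined with Proposition~\ref{prop:LHS-terms} and the observation in the discussion above (that every such right-hand term arises from a paracel $\gamma$), this shows the coefficients of $\xx^{E^{ef}}$ on the two sides agree, which was the remaining case.

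The step I expect to be the main obstacle is this last, realizability, part: one must carefully verify that adjoining $\alpha$ (respectively $\alpha'$) to $\gamma$ neither fuses the two special components nor spoils the smoot property of the edges of $\beta$, and handle the degenerate configurations (loops, edges internal to a single component, parallel edges), where tracking which component each endpoint of each edge falls into is the delicate bookkeeping.
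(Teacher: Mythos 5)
Your proposal is correct and follows essentially the same route as the paper: force $\beta$ to be exactly the set of smoots of $\gamma$ (using that $\alpha\cap\alpha'=\varnothing$ must itself be compatible, and that a smoot left outside $\beta$ would ruin the paracel condition), then split the remaining edges into $\alpha,\alpha'$ according to which of the two special components their endpoints touch. Your bookkeeping is a mirror image of the paper's (sorting by contact with $C_2$ rather than $C_1$) and you additionally verify the smoot condition for $\beta$ in $\eta(\gamma+\alpha)$, $\eta(\gamma+\alpha')$ explicitly, which the paper leaves implicit; otherwise the arguments coincide.
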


    \begin{proof}
        First, we show that there is exactly one choice of $\beta$. Since $\varnothing$ is compatible with $\beta,\gamma$, every edge in $\beta$ has to be a smoot for $\gamma$. Conversely, every edge that is a smoot for $\gamma$ has to be in $\beta$. Indeed, if a smoot $g$ is not in $\beta$, then $x_g$ is in $\mm$. This means that $g$ is in some $\alpha$ compatible with $\beta,\gamma$. This contradicts the requirement that $\gamma+\alpha$ is a paracel.

        Finally, we need to show that for each paracel $\gamma$ and $\beta = \{\text{smoots in $\gamma$}\}$, there is always at least one way to split the remaining edges into disjoint $\alpha$ and $\alpha'$ compatible with $\beta,\gamma$. Let $\Sigma$ be the set of the remaining edges, $C_1$ and $C_2$ be the two connected components connected by $e$ and $f$ in $\eta(\gamma)$. Let $\alpha$ be a (possibly empty) subset of $\Sigma$ consisting of all edges that have at least one endpoint in $C_1$. Let $\alpha' = \Sigma - \alpha$, clearly $\alpha$ and $\alpha'$ are disjoint.
        
        We claim that both $\alpha$ and $\alpha'$ are compatible with $\beta,\gamma$. Every edge in $\alpha'$ connects two vertices not in $C_1$. Hence, $C_1$ and $C_2$ are still disconnected in $\eta(\gamma + \alpha')$, so $\gamma + \alpha'$ is a paracel. We are left to show that $\gamma+\alpha$ is also a paracel. Suppose otherwise that $C_1$ and $C_2$ are connected in $\eta(\gamma + \alpha)$. This means that in $\eta(\gamma + \alpha)$, there is a path from a vertex in $C_1$ to a vertex in $C_2$ with edges in $\alpha$. However, every edge in $\alpha$ has an endpoint in $C_1$, so this means there is an edge between a vertex in $C_1$ and a vertex in $C_2$. This edge is a smoot, which is a contradiction since every smoot is in $\beta$.
    \end{proof}

    \begin{example}\label{ex:RHSEx1}
        In Figure \ref{fig:RHSEx1}, $e$ and $f$ are the red dashed edges. Let $\gamma$ contain the blue edges, then all remaining (orange) edges are smoots, so they are all in $\beta$. Then, $\alpha = \alpha' = \varnothing$.
    
        \begin{figure}[h!]
            \centering
            \includegraphics[scale = 0.6]{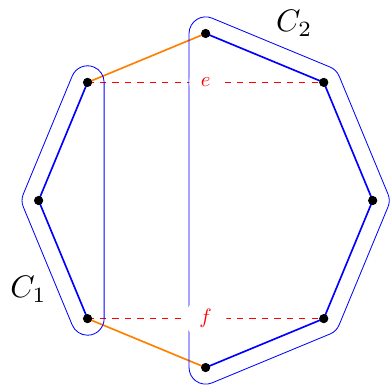}
            \caption{}
            \label{fig:RHSEx1}
        \end{figure}
    \end{example}

    \begin{example}
        In Figure \ref{fig:RHSEx2}, $e,f,\gamma$ and $\beta$ are the same as in Example \ref{ex:RHSEx1}. Among the remaining (black) edges, none has an endpoint in $C_1$. Thus, they are all in $\alpha'$ and $\alpha = \varnothing$.
    
        \begin{figure}[h!]
            \centering
            \includegraphics[scale = 0.6]{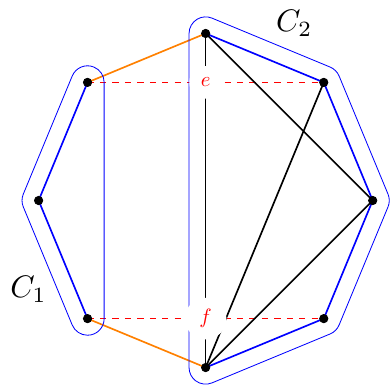}
            \caption{}
            \label{fig:RHSEx2}
        \end{figure}
    \end{example}

    \begin{example}
        In Figure \ref{fig:RHSEx3}, $e$ and $f$ are colored red, and $\gamma$ is colored blue. The orange edges are forces to be in $\beta$. Among the remaining edges, the green edges have at least one endpoint in $C_1$, so they are in $\alpha$. The remaining edges are in $\alpha'$. The readers can verify from Figure \ref{subfig:RHSEx3alpha} and \ref{subfig:RHSEx3alphap} that $\gamma + \alpha$ and $\gamma + \alpha'$ are both paracels.
    
        \begin{figure}[h!]
         \centering
            \begin{subfigure}[b]{0.3\textwidth}
                \centering
                \includegraphics[scale = 0.6]{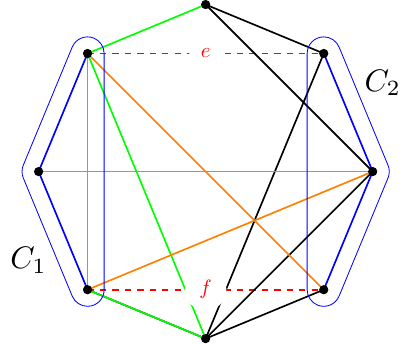}
                \caption{$G$}
                \label{subfig:RHSEx3}
            \end{subfigure}
            \begin{subfigure}[b]{0.3\textwidth}
                \centering
                \includegraphics[scale = 0.6]{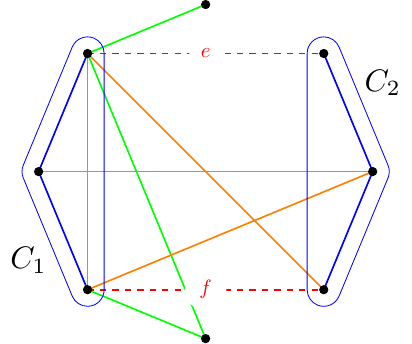}
                \caption{$\beta + \gamma + \alpha$}
                \label{subfig:RHSEx3alpha}
            \end{subfigure}
            \begin{subfigure}[b]{0.3\textwidth}
                \centering
                \includegraphics[scale = 0.6]{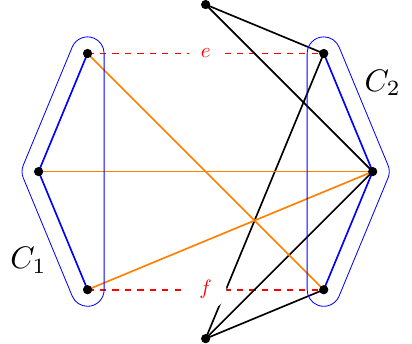}
                \caption{$\beta + \gamma + \alpha'$}
                \label{subfig:RHSEx3alphap}
            \end{subfigure}
    
            \caption{}
            \label{fig:RHSEx3}
        \end{figure}
    \end{example}

\section{$\alpha \beta \gamma$-ansatz} \label{sec:conj}

    We conjecture that $\MMM_{ef}(q)$ in general can be written in terms of $\beta,\gamma$, and $A_{\beta,\gamma}$ similar to Theorem \ref{thm:main-thm}.

    \begin{conjecture}\label{conj:general-q}
        For any graph $G$ and $0\leq q\leq 1$, the polynomial $\MMM_{ef}(q)$ can be written in the form
        \[ \dfrac{\MMM_{ef}(q)}{q^2} = \sum_{\beta,\gamma} \xx^\beta\xx^\gamma Q_{\beta,\gamma}, \]
        where $Q_{\beta,\gamma}$ is a positive semidefinite quadratic form in the variables $\{\xx_\alpha~|~\alpha\in A_{\beta,\gamma}\}$. Here, $A_{\beta,\gamma}$ is the set of $\alpha$'s compatible with $\beta$, $\gamma$ as in Definition \ref{def:compatible}.
        
        Furthermore, setting $q = 0$ and taking lowest degree terms in $Q_{\varnothing,\varnothing}$ gives the classical UST formula in Theorem \ref{thm:trees}.
    \end{conjecture}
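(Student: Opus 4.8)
The plan is to prove Conjecture \ref{conj:general-q} in three stages. First, establish the $\beta\gamma$-decomposition $\MMM_{ef}(q)/q^2=\sum_{\beta,\gamma}\xx^\beta\xx^\gamma R_{\beta,\gamma}(q)$ as an honest polynomial identity in $q$ and the $x_e$, with each $R_{\beta,\gamma}$ a symmetric quadratic form in $\{x_\alpha:\alpha\in A_{\beta,\gamma}\}$. Second, prove that the Gram matrix of each $R_{\beta,\gamma}(q)$ is positive semidefinite for $q\in[0,1]$. Third, check the endpoint specializations: at $q=1$ against Theorem \ref{thm:main-thm}, and at $q=0$ that the lowest-degree part of $R_{\varnothing,\varnothing}$ reproduces the square of Theorem \ref{thm:trees}. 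Granting all three, positivity of $\MMM_{ef}(q)$ on $[0,1]$ — hence Conjectures \ref{conj:old} and \ref{conj:main} and the Kahn--Grimmett--Winkler conjecture — is immediate, since each $\xx^\beta\xx^\gamma$ is a positive monomial. Note that the conjecture is strictly stronger than positivity: it posits a specific combinatorial shape for the certificate, so a positivity proof by an unrelated route (say a real-stability property of $\MMM_{ef}(q)$ in the $x_e$) would not settle it.

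For stage one I would run the deletion--contraction induction on $|E|$ of Section \ref{sec:proof}, now tracking the exact $q$-exponent of every surviving term. Deleting an edge $g$ with $\lambda_g=0$ changes no $k(\cdot)$, so the coefficient of $\xx^\lambda$ as a polynomial in $q$ is unchanged on both sides, just as at $q=1$. Contracting an edge $g$ with $\lambda_g=2$ is also uniform in $q$: such a $g$ must lie in $A\cap B$ on the left and in $\alpha\cap\alpha'$ on the right, contracting an edge present in a subgraph preserves its component count, and $\alpha\setminus g,\alpha'\setminus g$ remain compatible with $\beta,\gamma$ (and twins) in $G/g$, so the coefficient of $\xx^\lambda$ is $x_g^2$ times the coefficient of the corresponding monomial for $G/g$. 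This reduces the \emph{identity} to $\xx^\lambda=\xx^{E^{ef}}$, where one sorts the pairs $(A,E^{ef}-A)$ by the configuration types of Figures \ref{fig:PosA}--\ref{fig:NegB}, cancels the non-paracel contributions via the involution $(A,E^{ef}-A)\leftrightarrow(E^{ef}-A,A)$ as in Propositions \ref{prop:LHS-terms} and \ref{prop:RHS-term}, and reads $R_{\beta,\gamma}(q)$ off the $q$-exponent $k_1(A,B)$ of the surviving paracel terms. Producing a clean closed form for $R_{\beta,\gamma}(q)$ in terms of $\gamma$, the induced smoot set $\beta$, and the bipartition of the free edges into $\alpha,\alpha'$, and checking that this decomposition is the unique one compatible with minors, is the first real piece of work — involved but, I expect, not conceptually hard.

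Stage two is the crux and, I expect, the main obstacle: there is no FKG-type tool for semidefiniteness, so the certificate $R_{\beta,\gamma}(q)\succeq 0$ must be built by hand. The most promising route is an explicit Gram factorization $R_{\beta,\gamma}(q)=\sum_{i}\bigl(\sum_{\alpha\in A_{\beta,\gamma}}c_{i,\alpha}(q)\,x_\alpha\bigr)^2$ whose linear forms come from signed generating functions of ``forests relative to $(\beta,\gamma,\alpha)$'', in the spirit of the collections $\mathfrak F,\mathfrak F'$ of Theorem \ref{thm:trees}; the rank-one case should degenerate at $q=0$ to exactly that theorem, which is simultaneously a sanity check and a guide to the construction. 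Failing an explicit square decomposition, one could try to realize $\sum_{\beta,\gamma}\xx^\beta\xx^\gamma R_{\beta,\gamma}(q)$ as coming from a Lorentzian or real-stable polynomial in auxiliary variables, following Br\"and\'en--Huh and Huh--Schr\"oter--Wang; the obstruction is that the weights $q^{k(\omega)}$ couple to the edge variables in a way that is not obviously stability-preserving, so one would probably first introduce one auxiliary variable per potential component and homogenize. A third option inducts on $|E|$ directly at the level of the Gram matrices, showing deletion and contraction act by semidefiniteness-preserving operations (principal submatrices, Schur complements, rank-one updates); this is clean in spirit but hinges on controlling how $A_{\beta,\gamma}$ changes under minors, which is delicate near parallel edges and loops.

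Stage three should be comparatively routine once the explicit $R_{\beta,\gamma}$ are in hand. At $q=1$ each Gram matrix must, after collecting monomials, reproduce $\sum_{\mm\in B_{\beta,\gamma}}\mm$; the subtle point that a monomial arising as $x_\alpha x_{\alpha'}$ for twins in two different ways still occurs once should be forced by the twin condition in Definition \ref{def:compatible}, exactly as in the last example of Section \ref{sec:formula}. At $q=0$ one must show that the $R_{\beta,\gamma}$ with $(\beta,\gamma)\neq(\varnothing,\varnothing)$, together with the higher-rank part of $R_{\varnothing,\varnothing}$, contribute only to strictly higher powers of $q$ than the lowest-degree part — which should fall out of the stage-one bookkeeping, since a nonempty $\beta$ or $\gamma$ raises $k_1(A,B)$. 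If all three stages go through, the conjecture follows, and with it Theorems \ref{thm:trees} and \ref{thm:main-thm} as the $q=0$ and $q=1$ faces of a single statement, and the Kahn--Grimmett--Winkler conjecture as a corollary.
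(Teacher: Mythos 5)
What you have written is a research plan, not a proof, and it is worth being clear that the statement you are addressing is genuinely open: the paper itself offers no proof of Conjecture \ref{conj:general-q}, only supporting examples and a remark explaining why the construction is unclear. Your own text concedes that stage two (positive semidefiniteness of the $Q_{\beta,\gamma}$) is ``the crux'' and offers only candidate strategies, so even on its own terms the proposal does not establish the conjecture.

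Beyond that, stage one already contains a concrete gap. You claim the deletion--contraction induction of Section \ref{sec:proof} goes through ``uniformly in $q$'' and that one can then read $R_{\beta,\gamma}(q)$ off the $q$-exponents of the surviving paracel terms after cancelling via the involution $(A,E^{ef}-A)\leftrightarrow(E^{ef}-A,A)$. But that cancellation is specific to $q=1$: the negative pair contributes $-q^{k_2(A,E^{ef}-A)}\xx^{E^{ef}}$ while its partner contributes $+q^{k_1(E^{ef}-A,A)}\xx^{E^{ef}}$, and these exponents differ in general, so for generic $q$ the matched pairs leave a nonzero difference of powers of $q$ rather than cancelling. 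Consequently the naive $q$-refinement of Theorem \ref{thm:main-thm} does not produce a block decomposition indexed by $(\beta,\gamma)$ at all; the paper's own remark following Example \ref{ex:qK4-1} shows that monomials such as $q^2x_gx_hx_k$ must be reassigned between different $(\beta,\gamma)$ blocks to make the $Q_{\beta,\gamma}$ positive semidefinite, and that there are several inequivalent ways to do so. This directly undercuts your assertion that the decomposition is ``the unique one compatible with minors,'' and it also complicates your proposed Gram factorization: in the $K_4$ example the paper exhibits $Q_{\varnothing,\varnothing}=(x_gx_h)^2+(x_kx_\ell)^2-(2-3q+q^2)x_gx_hx_kx_\ell$, which is positive semidefinite on $0\le q\le 1$ but is not a perfect square, so any sum-of-squares certificate must have $q$-dependent, non-polynomial (or carefully chosen) coefficients. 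Until you (i) specify a canonical rule for distributing the general-$q$ terms among the $(\beta,\gamma)$ blocks and (ii) produce an actual semidefiniteness certificate for the resulting forms, the conjecture remains open and your outline is a plausible but unexecuted program.
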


    Here we give a few examples illustrating Conjecture \ref{conj:general-q}.

    \begin{example}
        For $G = K_3$ with three edges $e,f,g$. The pairs of $\beta$ and $\gamma$ with nonzero $Q_{\beta,\gamma}$ are
        \begin{center}
            \begin{tabular}{c | c | c | c} 
                 $\beta$ & $\gamma$ & $A_{\beta,\gamma}$ & $Q_{\beta,\gamma}$ \\
                 \hline\hline
                 $\varnothing$ & $\varnothing$ & $\{\{g\}\}$ & $x_g^2$ \\
                 \hline
                 $\varnothing$ & $\{g\}$ & $\{\varnothing\}$ & $q$
            \end{tabular}
        \end{center}

        Hence,
        \[ \dfrac{\MMM_{ef}(q)}{q^2} = \xx^{\varnothing}\cdot \xx^{\varnothing}\cdot x_g^2 + \xx^{\varnothing}\cdot \xx^{\{g\}}\cdot q = x_g^2 + qx_g, \]
        which is consistent with Example \ref{ex:k3q}. 
        
        Setting $q = 0$ and taking lowest degree terms in $Q_{\varnothing,\varnothing}$ give $x_g^2$, which is the UST formula since $\{g\}$ is the only paracel that is a spanning forest with two connected components.
    \end{example}

    \begin{example}\label{ex:qK4-1}
        Revisiting the graph in Example \ref{ex:K4-1}, we have the following terms.

        \begin{center}
            \begin{tabular}{c | c | c | c} 
                 $\beta$ & $\gamma$ & $A_{\beta,\gamma}$ & $Q_{\beta,\gamma}$ \\
                 \hline\hline
                 $\varnothing$ & $\varnothing$ & $\{\{g\},\{g,h\},\{g,k\}\}$ & $(qx_g + x_gx_h + x_gx_k)^2$ \\
                 \hline
                 $\varnothing$ & $\{g\}$ & $\{\varnothing,\{h\},\{k\}\}$ & $q(q + x_h + x_k)^2$ \\
                 \hline
                 $\varnothing$ & $\{h\}$ & $\{\{g\}\}$ & $0$ \\
                 \hline
                 $\varnothing$ & $\{k\}$ & $\{\{g\}\}$ & $0$ \\
                 \hline
                 $\varnothing$ & $\{g,h\}$ & $\{\varnothing\}$ & $0$ \\
                 \hline
                 $\varnothing$ & $\{g,k\}$ & $\{\varnothing\}$ & $0$ \\
                 \hline
                 $\{h\}$ & $\varnothing$ & $\{\{g,k\}\}$ & $x_g^2x_k^2$ \\
                 \hline
                 $\{h\}$ & $\{g\}$ & $\{\{k\}\}$ & $qx_k^2$ \\
                 \hline
                 $\{h\}$ & $\{k\}$ & $\{\{g\}\}$ & $qx_g^2$ \\
                 \hline
                 $\{h\}$ & $\{g,k\}$ & $\{\varnothing\}$ & $q^2$ \\
                 \hline
                 $\{k\}$ & $\varnothing$ & $\{\{g,h\}\}$ & $x_g^2x_h^2$ \\
                 \hline
                 $\{k\}$ & $\{g\}$ & $\{\{h\}\}$ & $qx_h^2$ \\
                 \hline
                 $\{k\}$ & $\{h\}$ & $\{\{g\}\}$ & $0$ \\
                 \hline
                 $\{k\}$ & $\{g,h\}$ & $\{\varnothing\}$ & $0$ \\
            \end{tabular}
        \end{center}

        Note that there are some pairs of $\beta$ and $\gamma$ with $Q_{\beta,\gamma} = 0$ but nonempty $B_{\beta,\gamma}$ in Example \ref{ex:K4-1}. This is because we need these monomials in $(qx_g + x_gx_h + x_gx_k)^2$ and $q(q + x_h + x_k)^2$ in order to obtain positive semidefinite quadratic forms.

        Setting $q = 0$ and taking lowest degree terms in $Q_{\varnothing,\varnothing}$ give
        \[ (x_gx_h + x_gx_k)^2, \]
        which is the UST formula.
    \end{example}

    \begin{remark}
        Example \ref{ex:qK4-1} suggests that from Theorem \ref{thm:main-thm}, we can group monomials from different pairs of $(\beta,\gamma)$ together to form positive semidefinite quadratic forms. However, it is not clear to us which pairs of $(\beta,\gamma)$ should be grouped together.
        
        For instance, in Example \ref{ex:qK4-1}, we merged the monomial $q^2x_gx_hx_k$ from $(\beta,\gamma) = (\{k\}, \{g,h\})$ into $(\beta,\gamma) = (\varnothing,\{g\})$. Notice that this monomial also appears in $(\beta,\gamma) = (\{h\}, \{g,k\})$, so we could have instead merged this pair instead. Another option is to merge $\dfrac{1}{2}q^2x_gx_hx_k$ from each pair, but this approach quickly becomes intractable as the graph grows bigger.
    \end{remark}

    \begin{example}
         Let $G$ and $e,f$ as in Figure \ref{fig:K4minusOne2}.
        
        \begin{figure}[h!]
            \centering
            \includegraphics[scale = 0.8]{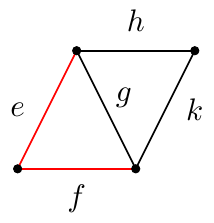}
            \caption{}
            \label{fig:K4minusOne2}
        \end{figure}

        We have the following terms.

        \begin{center}
            \begin{tabular}{c | c | c | c} 
                 $\beta$ & $\gamma$ & $A_{\beta,\gamma}$ & $Q_{\beta,\gamma}$ \\
                 \hline\hline
                 $\varnothing$ & $\varnothing$ & $\{\{g\},\{g,h\},\{g,k\},\{g,h,k\}\}$ & $(qx_g + x_gx_h + x_gx_k + x_gx_hx_k)^2$ \\
                 \hline
                 $\varnothing$ & $\{g\}$ & $\{\varnothing,\{h\},\{k\},\{h,k\}\}$ & $q(x_h+x_k+q)^2$ \\
                 \hline
                 $\varnothing$ & $\{h\}$ & $\{\{g\},\{k\},\{g,k\}\}$ & $qx_k^2$ \\
                 \hline
                 $\varnothing$ & $\{k\}$ & $\{\{g\},\{h\},\{g,h\}\}$ & $qx_h^2$ \\
                 \hline
                 $\varnothing$ & $\{g,h\}$ & $\{\varnothing,\{k\}\}$ & $qx_k^2$ \\
                 \hline
                 $\varnothing$ & $\{g,k\}$ & $\{\varnothing,\{h\}\}$ & $qx_h^2$ \\
                 \hline
                 $\varnothing$ & $\{h,k\}$ & $\{\varnothing,\{g\}\}$ & $q^2$ \\
                 \hline
                 $\varnothing$ & $\{g,h,k\}$ & $\{\varnothing\}$ & $q^2$ \\
            \end{tabular}
        \end{center}

        Notice that for $(\beta,\gamma) = (\varnothing,\{h\})$, there are three subsets in $A_{\beta,\gamma}$, but the quadratic form $Q_{\beta,\gamma}$ only has $qx_k^2$. This also happens with other pairs of $(\beta,\gamma)$.

        Setting $q = 0$ and taking lowest degree terms in $Q_{\varnothing,\varnothing}$ give
        \[ (x_gx_h + x_gx_k)^2, \]
        which is the UST formula.
    \end{example}

    \begin{example}
        Let $G = K_4$ as in Figure \ref{fig:K4} with $e,f$ colored red.
        
        \begin{figure}[h!]
            \centering
            \includegraphics[scale = 0.8]{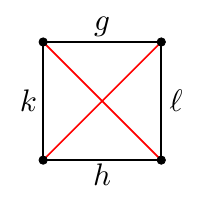}
            \caption{}
            \label{fig:K4}
        \end{figure}

        We have the following terms.

        \begin{center}
            \begin{tabular}{c | c | c | c} 
                 $\beta$ & $\gamma$ & $A_{\beta,\gamma}$ & $Q_{\beta,\gamma}$ \\
                 \hline\hline
                 $\varnothing$ & $\varnothing$ & $\{\{g,h\},\{k,\ell\}\}$ & $(x_gx_h)^2 + (x_kx_\ell)^2 - (2-3q+q^2)x_gx_hx_kx_\ell$ \\
                 \hline
                 $\varnothing$ & $\{g\}$ & $\{\{h\}\}$ & $qx_h^2$ \\
                 \hline
                 $\varnothing$ & $\{h\}$ & $\{\{g\}\}$ & $qx_g^2$ \\
                 \hline
                 $\varnothing$ & $\{k\}$ & $\{\{\ell\}\}$ & $qx_\ell^2$ \\
                 \hline
                 $\varnothing$ & $\{\ell\}$ & $\{\{k\}\}$ & $qx_k^2$ \\
                 \hline
                 $\varnothing$ & $\{g,h\}$ & $\{\varnothing\}$ & $q^2$ \\
                 \hline
                 $\varnothing$ & $\{k,\ell\}$ & $\{\varnothing\}$ & $q^2$ \\
                 \hline
                 \multirow{4}{6.5em}{$\{g\},\{h\},\{g,h\}$} & $\varnothing$ & $\{\{k,\ell\}\}$ & $x_k^2x_\ell^2$ \\
                 & $\{k\}$ & $\{\{\ell\}\}$ & $qx_\ell^2$ \\
                 & $\{\ell\}$ & $\{\{k\}\}$ & $qx_k^2$ \\
                 & $\{k,\ell\}$ & $\{\varnothing\}$ & $q^2$ \\
                 \hline
                 \multirow{4}{6.5em}{$\{k\},\{\ell\},\{k,\ell\}$} & $\varnothing$ & $\{\{g,h\}\}$ & $x_g^2x_h^2$ \\
                 & $\{g\}$ & $\{\{h\}\}$ & $qx_h^2$ \\
                 & $\{h\}$ & $\{\{g\}\}$ & $qx_g^2$ \\
                 & $\{g,h\}$ & $\{\varnothing\}$ & $q^2$ \\
            \end{tabular}
        \end{center}

        Note that $Q_{\varnothing,\varnothing}$ is no longer a perfect square, but it is still positive semidefinite for $0\leq q \leq 1$. Nevertheless, setting $q = 0$ and taking lowest degree terms in $Q_{\varnothing,\varnothing}$ does give a perfect square
        \[ (x_gx_h - x_kx_\ell)^2, \]
        which is the UST formula.
    \end{example}

\bibliography{bibliography}
\bibliographystyle{alpha}

\end{document}